\documentclass[12pt, reqno]{amsart}

\usepackage{
amsmath,
amssymb,
amsthm,
mathrsfs,
amsfonts,
ytableau,
enumitem,
upgreek,
soul,
yhmath,
mathtools,
shuffle,
kotex,
array,
caption,
}

\usepackage{latexsym}
\usepackage{stackengine}
\usepackage[colorlinks=true, pdfstartview=FitV, linkcolor=blue,citecolor=blue, urlcolor=blue]{hyperref}

\usepackage{tikz,tikz-cd}
\usepackage[colorinlistoftodos, textwidth = 2.3cm]{todonotes}

\ytableausetup{mathmode, boxsize=1.2em}
\setlength{\marginparwidth}{2.2cm}
\setlength{\textwidth}{16cm} \setlength{\textheight}{20cm}
\setlength{\oddsidemargin}{0.3cm} \setlength{\evensidemargin}{0.3cm}

\title[Induction and restriction of two variable Hecke algebras]
{Induction and restriction of two variable Hecke algebras}

\author[W.-S. Jung]{Woo-Seok Jung}
\address{Department of Mathematics, Sogang University, Seoul 04107, Republic of Korea}
\email{jungws@sogang.ac.kr}

\author[Y.-T. Oh]{Young-Tak Oh}
\address{Department of Mathematics, Sogang University, Seoul 04107, Republic of Korea}
\email{ytoh@sogang.ac.kr}

\thanks{All authors were supported by the National Research Foundation of Korea (NRF) Grant funded by the Korean Government (NRF-2020R1F1A1A01071055).}

\keywords{Mackey formula, Two variable Hecke algebra, $0$-Hecke algebra}
\subjclass[2020]{20C08, 20F55}

\newtheorem{theorem}{Theorem}[section]

\newtheorem{lemma}[theorem]{Lemma}
\newtheorem{corollary}[theorem]{Corollary}

\newtheorem*{claim*}{Claim}

\theoremstyle{definition}

\newtheorem{definition}[theorem]{Definition}
\newtheorem{remark}[theorem]{Remark}

\numberwithin{equation}{section} \numberwithin{figure}{section}
\numberwithin{table}{section}

\newcommand{\nc}{\newcommand}

\nc{\SG}{\mathfrak{S}}
\nc{\frakR}{\mathfrak{R}}
\nc{\frakL}{\mathfrak{L}}
\nc{\PCT}{\mathrm{PCT}}
\nc{\SPCT}{\mathrm{SPCT}}
\nc{\RT}{\mathrm{RT}}
\nc{\SRT}{\mathrm{SRT}}
\nc{\RCT}{\mathrm{RCT}}
\nc{\SRCT}{\mathrm{SRCT}}
\nc{\SYRT}{\mathrm{SYRT}}
\nc{\SYCT}{\mathrm{SYCT}}
\nc{\SPYCT}{\mathrm{SPYCT}}
\nc{\tst}{\mathtt{st}}
\nc{\Span}{\mathrm{span}}
\nc{\comp}{\mathrm{comp}}
\nc{\rmst}{\mathrm{st}}
\nc{\Des}{\mathrm{Des}}
\nc{\set}{\mathrm{set}}
\nc{\wt}{\mathrm{wt}}
\nc{\ch}{\mathrm{ch}}
\nc{\id}{\mathrm{id}}
\nc{\Sym}{\mathrm{Sym}}
\nc{\Qsym}{\mathrm{QSym}}
\nc{\Nsym}{\mathrm{NSym}}
\nc{\sh}{\mathrm{sh}}
\nc{\bfS}{\mathbf{S}}
\nc{\bfm}{\mathbf{m}}
\nc{\hbfS}{\widehat{\mathbf{S}}}
\nc{\bfF}{\mathbf{F}}
\nc{\calB}{\mathcal{B}}
\nc{\calS}{\mathcal{S}}
\nc{\hcalS}{\widehat{\mathcal{S}}}
\nc{\alphamax}{\alpha_{\rm max}}
\nc{\brho}{\overline{\rho}}
\nc{\bphi}{\overline{\phi}}
\nc{\calV}{\mathcal{V}}
\nc{\calR}{\mathcal{R}}
\nc{\sfR}{\mathsf{R}}
\nc{\calG}{\mathcal{G}}
\nc{\tal}{\lambda(\alpha)}
\nc{\tbe}{\widetilde{\beta}}
\nc{\opi}{\overline{\pi}}
\nc{\calP}{\mathcal{P}}
\nc{\rmtop}{\mathrm{top}}
\nc{\rad}{\mathrm{rad}}
\nc{\bfP}{\mathbf{P}}
\nc{\SET}{\mathrm{SET}}
\nc{\SIT}{\mathrm{SIT}}
\nc{\rev}{\mathrm{r}}
\nc{\Th}{\theta}
\nc{\mPhi}{\Phi}
\nc{\mphi}{\phi}
\nc{\mPsi}{\Psi}
\nc{\hmPsi}{\widehat{\Psi}}
\nc{\mpsi}{\psi}
\nc{\mGam}{\Gamma}
\nc{\tcd}{\mathtt{cd}}
\nc{\trd}{\mathtt{rd}}
\nc{\trcd}{\mathtt{rcd}}
\nc{\rmr}{\mathrm{r}}
\nc{\rmc}{\mathrm{c}}
\nc{\rmt}{\mathrm{t}}
\nc{\bubact}{\,\scalebox{0.6}{$\bullet$}\,}
\nc{\hbubact}{\,\scalebox{0.6}{$\widehat{\bullet}$}\,}
\nc{\col}{\rm col}
\nc{\row}{\rm row}
\nc{\calE}{\mathcal{E}}
\nc{\calT}{\mathscr{T}}
\nc{\sfT}{\mathsf{T}}
\nc{\calEsa}{\mathcal{E}^\sigma(\alpha)}
\nc{\tauC}{\tau_{\scalebox{0.5}{$C$}}}
\nc{\sytabC}{\sytab_{\scalebox{0.5}{$C$}}}
\nc{\bbfP}{\overline{\bfP}}
\nc{\pr}{\mathbf{pr}}
\nc{\Ups}{\Upsilon}
\nc{\pact}{\diamond}
\nc{\tauE}{\tau_{\scalebox{0.5}{$E$}}}
\nc{\tauF}{\tau_{\scalebox{0.5}{$F$}}}
\nc{\tauG}{\tau_{\scalebox{0.5}{$G$}}}
\nc{\rtE}{T_{\scalebox{0.5}{$E$}}}
\nc{\rtF}{T_{\scalebox{0.5}{$F$}}}
\nc{\rtG}{T_{\scalebox{0.5}{$G$}}}
\nc{\oPaE}{\overline{\Phi}_{\alpha_E}}
\nc{\oPaF}{\overline{\Phi}_{\alpha_F}}
\nc{\oPaG}{\overline{\Phi}_{\alpha_G}}
\nc{\tab}{\tau}
\nc{\sytab}{\widehat{\tau}}
\nc{\hatE}{\widehat{E}}
\nc{\hati}{\hat{i}}
\nc{\hcalE}{\widehat{\calE}}
\nc{\hatC}{\widehat{C}}
\nc{\bal}{{\boldsymbol{\upalpha}}}
\nc{\bbe}{{\boldsymbol{\upbeta}}}

\nc{\bgam}{{\boldsymbol{\upgamma}}}
\nc{\bdel}{{\boldsymbol{\updelta}}}
\nc{\weakcon}{\odot}
\nc{\basisI}{I}

\nc{\ldalpha}{\lambda(\alpha)}
\nc{\SRIT}{\mathrm{SRIT}}
\nc{\re}{\mathrm{rev}}
\nc{\otau}{\overline{\tau}}
\nc{\rtop}{{\rm top}}
\nc{\sfc}{\mathsf{c}}
\nc{\sfr}{\mathsf{r}}
\nc{\tH}{\mathtt{H}}
\nc{\tV}{\mathtt{V}}
\nc{\rpi}{\mathring{\pi}}
\nc{\cpi}{\check{\pi}}
\nc{\frakm}{\mathfrak{m}}
\nc{\fke}{\mathfrak{e}}
\nc{\Hom}{\mathrm{Hom}}
\nc{\module}{\mathrm{mod} \, }
\nc{\fmodule}{\mathrm{fmod} \, }

\nc{\SPCTsa}{\SPCT^\sigma(\alpha)}
\nc{\bfSsa}{\bfS_\alpha^\sigma}
\nc{\bfSsaC}{{\bfS}^\sigma_{\alpha,C}}
\nc{\hbfSsa}{\widehat{\bfS}_\alpha^\sigma}
\nc{\upineq}{\rotatebox{90}{$<$}}
\nc{\downineq}{\rotatebox{270}{$<$}}
\nc{\diagineq}{\rotatebox{135}{$<$}}
\nc{\frakB}{\mathfrak{B}}
\nc{\hxi}{\widehat{\xi}}
\nc{\hxidwJ}{\hxi_{\scalebox{0.55}{$J$}}}
\nc{\hxiupJ}{\hxi^{\scalebox{0.55}{$J$}}}
\nc{\scrS}{\mathscr{S}}
\nc{\bfT}{\mathbf{T}}

\nc{\ra}{\rightarrow}

\nc{\matr}[2]{\left( \hspace{-1ex} \begin{array}{c} #1 \\ #2 \end{array} \hspace{-1ex} \right)}


\definecolor{wsgreen}{rgb}{0,0.5,0}

\nc{\DIRT}{\mathrm{DIRT}}
\nc{\hpi}{\pi}
\nc{\frakI}{\mathfrak{I}}
\nc{\hfrakI}{\widehat{\mathfrak{I}}}
\nc{\orho}{\overline{\rho}}
\nc{\autotheta}{\uptheta}
\nc{\autophi}{\upphi}
\nc{\autochi}{\upchi}
\nc{\autoomega}{\upomega}
\nc{\hIM}{\widehat{\frakB}}
\nc{\bfpi}{\boldsymbol{\uppi}}
\nc{\bfopi}{\overline{\boldsymbol{\uppi}}}
\nc{\ofrakB}{\overline{\frakB}}
\nc{\rmw}{\mathrm{w}}
\nc{\ostar}{\;\overline{*}\;}
\nc{\rank}{\mathrm{rank}}
\nc{\fkp}{\mathfrak{p}}
\nc{\bfR}{\mathbf{R}}

\nc{\upsig}{{\boldsymbol{\upsigma}}}
\nc{\bfSsaE}{{\bfS}^\upsig_{\alpha,E}}

\nc{\hfkp}{\widehat{\mathfrak{p}}}

\nc{\hautophi}{{\widehat{\autophi}}}
\nc{\hautotheta}{{\widehat{\autotheta}}}
\nc{\hautoomega}{{\widehat{\autoomega}}}
\nc{\rmperm}{\mathrm{perm}}

\nc{\bfsigJ}{\boldsymbol{\sigma}_{\scalebox{0.55}{$J$}}}
\nc{\bfrhoJ}{\boldsymbol{\rho}^{\scalebox{0.55}{$J$}}}

\nc{\pistar}[1]{\pi_{#1}^*}
\nc{\wfkp}{\widetilde{\mathfrak{p}}}
\nc{\bfpsi}{\boldsymbol{\uppsi}}

\nc{\yt}[1]{\todo[size=\tiny,color=blue!10]{#1 \\ \hfill --- Young-Tak}}
\nc{\YT}[1]{\todo[size=\tiny,inline,color=blue!10]{#1
		\\ \hfill --- Young-Tak}}
		
\nc{\yh}[1]{\todo[size=\tiny,color=cyan!10]{#1 \\ \hfill --- Young-Hun}}
\nc{\YH}[1]{\todo[size=\tiny,inline,color=cyan!10]{#1
		\\ \hfill --- Young-Hun}}
		
\nc{\sy}[1]{\todo[size=\tiny,color=magenta!10]{#1 \\ \hfill --- So-Yeon}}
\nc{\SY}[1]{\todo[size=\tiny,inline,color=magenta!10]{#1
		\\ \hfill --- So-Yeon}}
		
\nc{\ws}[1]{\todo[size=\tiny,color=green!10]{#1 \\ \hfill ---  Woo-Seok}}
\nc{\WS}[1]{\todo[size=\tiny,inline,color=green!10]{#1
		\\ \hfill --- Woo-Seok}}

\definecolor{purple}{rgb}{0.44, 0.0, 1.0}

\newenvironment{red}{\relax\color{red}}{\hspace*{.5ex}\relax}
\newenvironment{blue}{\relax\color{blue}}{\hspace*{.5ex}\relax}
\newenvironment{green}{\relax\color{wsgreen}}{\hspace*{.5ex}\relax}
\newenvironment{magenta}{\relax\color{magenta}}{\hspace*{.5ex}\relax}
\newenvironment{purple}{\relax\color{purple}}{\hspace*{.5ex}\relax}

\nc{\ber}{\begin{red}}
\nc{\er}{\end{red}}
\nc{\beb}{\begin{blue}}
\nc{\eb}{\end{blue}}
\nc{\bema}{\begin{magenta}}
\nc{\ema}{\end{magenta}}
\nc{\begr}{\begin{green}}
\nc{\egr}{\end{green}}

\nc{\bepu}{\begin{purple}}
\nc{\epu}{\end{purple}}

\begin{document}

\maketitle

\begin{abstract}
The purpose of this paper is to study induction and restriction of two-variable Hecke algebras.
First, we provide the explicit form of the Mackey decomposition formula. 
And then, we elucidate how (anti-)involutions interact with induction product and restriction.
\end{abstract}

\section{Introduction} 

Two-variable Hecke algebras are introduced as specializations of 
Iwahori-Hecke algebras (see \cite[Section 4.4]{GP00} or \cite[Section 7]{90H}).
Let $R$ be an unital commutative ring and $(W, S)$ a finite Coxeter system. 
Suppose we are given parameters $a_s, b_s\in R \,\,(s\in S)$ subject only to the requirement that $a_s=a_t$ and $b_s=b_t$ whenever $s$ and $t$ are conjugate in $W$.
Following Geck and Pfeiffer~\cite[Definition 4.4.1]{GP00}, 
one can associate to $(W, S)$ 
the {\it Iwahori-Hecke algebra} $H_R(W,S,\{a_s, b_s: s\in S\})$ over $R$.
For $a,b\in R$, the {\it two variable Hecke algebra} $H_W(a, b)$ over $R$
is defined to be the generic algebra $H_R(W,S,\{a_s, b_s: s\in S\})$ with the parameters
$a_s=a, b_s=b$ for all $s\in S$.
When $R$ is the field of complex numbers, it was proven in~\cite[Propsition 6.1]{BLL12} that $H_W(a, b)$ is isomorphic to 
one of the following families of algebras:
\begin{itemize}
    \item Hecke algebras at a generic value,
    \item Hecke algebras at a root of unity,
    \item $0$-Hecke algebras,
    \item nil-Coxeter algebras. 
\end{itemize}
This result was originally stated only for finite Coxeter groups of type $A$, 
but one can extend it to arbitrary finite type in the exactly same way as in~\cite{BLL12}.

The first objective of this paper is to provide the Mackey decomposition formula of two variable Hecke algebras.
Let us explain the motivation of this problem.
In 2009, Bergeron and Li~\cite[Section 3.1]{09BL} presented an axiomatic definition of a {\it tower of algebras}, 
which is a graded $\mathbb C$-algebra $A = (\oplus_{n\ge 0}A_n, \rho)$
satisfying certain five axioms.
Then they proved that $G_0(A):=\oplus_{n\ge 0}G_0(A_n)$ and $K_0(A):=\oplus_{n\ge 0}K_0(A_n)$ 
have a bialgebra structure and are dual to each other as connected graded bialgebras,
where $G_0(A_n)$ (resp. $K_0(A_n)$) is the Grothendieck group of the category of all finitely generated $A_n$-modules (resp. projective $A_n$-modules).
In their proof, the fifth axiom, which is an analogue of Mackey’s formula for $G_0(A)$ and $K_0(A)$, plays an important role.
At this point, one might naturally ask if Bergeron and Li's analogue of Mackey’s formula can be lifted to the module level in the same format, 
which is nontrivial unless the category of finitely generated $A_n$-modules is not semisimple for all $n\ge 0$. 
For instance, the category of finitely generated $H_n(0)$-modules is known to be not semisimple for $n\ge 3$
(see~\cite[Section 4.3]{02DHT} or~\cite[Theorem 3.1]{11DY}).
In~\cite[Proposition 9.1.8]{GP00}, 
the Mackey decomposition formula was stated in the case where 
given parameters $a_s, b_s\in R$
satisfy that $b_s=1-a_s$ for all $s \in S$ 
and the $H_R(W,S,\{a_s, 1-a_s: s\in S\})$-module in consideration is 
free as an $R$-module.

In~\cite[Theorem 3]{21JKLO}, this formula was provided   
for the family of $0$-Hecke modules called {\it weak Bruhat interval modules}.
In either case, the question posed above turns out to be affirmative. 
The present paper deals with this question for 
arbitrary modules of arbitrary two-variable Hecke algebras.
As stated before, when $R=\mathbb C$, our algebras in consideration 
cover Hecke algebras at a generic value,
Hecke algebras at a root of unity,
$0$-Hecke algebras, and 
nil-Coxeter algebras. 
Our Mackey decomposition formula is stated for parabolic subalgebras of $H_W(a, b)$ 
and its explicit form is provided (Theorem~\ref{thm: Mackey} and Corollary~\ref{cor: Mackey for 0-Hecke}).

The second objective of this paper is to show how (anti-)involutions interact with induction product and restriction.
In~\cite[Proposition 3.2]{05Fayers}, Fayers introduced involutions $\autophi, \autotheta$ and an anti-involution $\autochi$
on the $0$-Hecke algebra $H_n(0)$ defined by \[\autophi(\pi_s)=\pi_{w_0 s w_0}, \quad \autotheta(\pi_s)=1-\pi_s, \quad  \autochi(\pi_s)=\pi_s  \quad (s \in S),\]
where $w_0$ is the longest element of the symmetric group $\mathfrak S_n$, and used them to derive a branching rule that describes the submodule lattice of $M \big\uparrow^{H_{n}(0)}_{H_{n-1}(0)}$ 
for any simple module $M$ of $H_{n-1}(0)$.
In~\cite[Section 3.4]{21JKLO}, the (anti-)involution-twists of weak and negative weak Bruhat interval modules are intensively investigated for the (anti-)involutions obtained by composing $\autophi,\autotheta$ and $\autochi$. 
It is quite interesting to note that 
they exhibit very strong symmetry (see~\cite[Table 1 and Table 2]{21JKLO}).
Here, mimicking~\cite{05Fayers}, we introduce two involutions  
and one anti-involution on $H_W(a,b)$
(Lemma~\ref{involutions and anti-involution}).
Then we show how $H_W(a,b)$-modules behave under the process of (anti-)involution-twist for all the (anti-)involutions obtained by composing these (anti-)involutions (Theorem~\ref{thm: automorhpism, induction, restriction}
and Theorem~\ref{thm: chi twist, induction}).
In particular, Theorem~\ref{thm: chi twist, induction} can be viewed as a generalization of Fayers' lemma~\cite[Lemma~6.4]{05Fayers}.

This paper is organized as follows.
In Section~\ref{Sec. Two variable Hecke algebras}, we introduce the prerequisites on the two variable Hecke algebras including 
induction, restriction, and (anti-)automorphism twists.
In Section~\ref{Sec.The Mackey formula for two variable Hecke algebras},
we provide the Mackey decomposition formula for two variable Hecke algebras.
Section~\ref{Sec. Interaction with induction product and restriction} is devoted to elucidating the comparability of (anti-)-automorphism twists
with induction product and restriction.

\section{Two variable Hecke algebras}\label{Sec. Two variable Hecke algebras}
Let $(W,S)$ be a finite Coxeter system with $S=\{s_1, s_2,\ldots,s_n\}$. That is, 
$$
W = <s_1, s_2,\ldots,s_n \mid (s_i s_j)^{m_{ij}}>
$$
for some symmetric $n$ by $n$ matrix $(m_{ij})$ with entries $\{2,3,4,6\}$ with $m_{ii}=1$ and $m_{ij}>1$ for $i \neq j$.
For $I \subseteq S$, we write $W_I$ for the subgroup of $W$ generated by $I$, which is also a Coxeter group and called a \emph{parabolic subgroup} of $W$.

Throughout this paper, $R$ denotes a commutative ring with $1$.

\begin{definition}
For $a,b\in R$, the \emph{two variable Hecke algebra} $H_W(a, b)$ of $W$ over $R$ is an $R$-algebra with $1$ generated by the elements $\pi_s$ ($s \in S$) subjects to the following defining relations:
\begin{align*}
    \pi_s \pi_s &= a \pi_s + b, \\
    (\pi_i \pi_j \pi_i \ldots)_{m_{ij}} &= (\pi_j \pi_i \pi_j \ldots)_{m_{ij}}\quad \text{for all $i,j\in S$ with $i\ne j$},
\end{align*}
where the notation $(aba\cdots)_{m}$ denotes the product of $m$ terms following the order in the parenthesis.
\end{definition}

When it is not necessary to specify $a$ and $b$, 
we frequently write $H_W$ for $H_W(a, b)$ for simplicity.
For any reduced expression $s_{i_1}s_{i_2} \ldots s_{i_l}$ of $w \in W$, let $\pi_w := \pi_{i_1}\pi_{i_2} \ldots \pi_{i_l}$. It is well known that $\pi_w$ is independent of the choices of reduced expressions (Matsumoto's theorem).
Given $I \subseteq S$, let $W_I$ be the parabolic subgroup of $W$ corresponding to $I$.
The two variable Hecke algebra $H_{W_I}(a, b)$ associated to the Coxeter system $(W_I,I)$ is
a subalgebra of $H_W(a, b)$, which is called the \emph{parabolic subalgebra} of $H_W(a, b)$ corresponding to $I$.

\begin{theorem}{\rm (\cite[Section 7.1]{90H})}\label{thm: left-right actions}
For $a,b\in R$, the two variable Hecke algebra $H_W(a, b)$ over $R$ is a free $R$-module with basis elements $\pi_w \, (w \in W)$,  and for all $s \in S, w \in W$, multiplication is 
given by
\[\pi_s \pi_w
=\begin{cases}
\pi_{sw}          &\text{ if } \ell(sw)>\ell(w), \\ 
a\pi_w+ b\pi_{sw} & \text{ if }\ell(sw)<\ell(w).
\end{cases}\]
\end{theorem}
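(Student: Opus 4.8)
The plan is to run the classical two-step argument: first show that $\{\pi_w : w \in W\}$ \emph{spans} $H_W(a,b)$ over $R$ using only the defining relations, and then exhibit an action of $H_W(a,b)$ on a free $R$-module of rank $|W|$ under which the $\pi_w$ act by $R$-linearly independent operators. The multiplication formula then falls out almost for free.

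For the spanning claim I would induct on word length. Every element of $H_W(a,b)$ is an $R$-linear combination of monomials $\pi_{i_1}\pi_{i_2}\cdots\pi_{i_l}$. If $s_{i_1}s_{i_2}\cdots s_{i_l}$ is a reduced expression, then Matsumoto's theorem (already invoked in the text) says this monomial equals $\pi_w$ with $w = s_{i_1}\cdots s_{i_l}$, independently of the chosen reduced word. If the word is not reduced, then by Tits' solution of the word problem one can apply braid relations to bring two equal generators adjacent, and then the quadratic relation $\pi_s\pi_s = a\pi_s + b$ rewrites the monomial as an $R$-linear combination of strictly shorter monomials; the induction hypothesis finishes the argument. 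Hence $H_W(a,b) = \sum_{w \in W} R\pi_w$.

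For linear independence, equivalently freeness of rank $|W|$, I would construct the ``regular-type'' representation. Let $E = \bigoplus_{w \in W} R e_w$ be free of rank $|W|$ and define $R$-linear operators $\rho(s) \in \mathrm{End}_R(E)$ by $\rho(s)e_w = e_{sw}$ when $\ell(sw) > \ell(w)$ and $\rho(s)e_w = a e_w + b e_{sw}$ when $\ell(sw) < \ell(w)$. Using $\ell(s\cdot sw) = \ell(w)$, a direct computation gives $\rho(s)^2 = a\,\rho(s) + b\cdot\mathrm{id}$ in both cases. For the braid relations, fix $J = \{i,j\}$ and use the decomposition $W = \bigsqcup_{d} W_J d$ into right cosets with $d$ of minimal length, so that $\ell(vd) = \ell(v) + \ell(d)$ for $v \in W_J$; then each subspace $\bigoplus_{v \in W_J} R e_{vd}$ is stable under $\rho(i)$ and $\rho(j)$, and on it these operators reproduce exactly the analogous operators attached to the dihedral group $W_J$. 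Thus $(\rho(i)\rho(j)\cdots)_{m_{ij}} = (\rho(j)\rho(i)\cdots)_{m_{ij}}$ reduces to the rank-two case, which is a finite verification. Consequently $\rho$ extends to an algebra homomorphism $H_W(a,b) \to \mathrm{End}_R(E)$, and a straightforward induction on $\ell(w)$ shows $\rho(\pi_w)e_{\mathrm{id}} = e_w$. Any relation $\sum_w c_w \pi_w = 0$ then forces $\sum_w c_w e_w = 0$ in $E$, hence all $c_w = 0$; together with the spanning step this proves $\{\pi_w : w \in W\}$ is an $R$-basis. (Alternatively, freeness can be deduced by specialization from the known freeness of the generic Iwahori--Hecke algebra.)

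Finally, the multiplication formula. If $\ell(sw) > \ell(w)$, then prepending $s$ to a reduced expression for $w$ yields a reduced expression for $sw$, so $\pi_s\pi_w = \pi_{sw}$ by definition of $\pi_{sw}$. If $\ell(sw) < \ell(w)$, set $w' = sw$; then $\ell(w) = \ell(w') + 1$ and $\pi_w = \pi_s\pi_{w'}$, whence $\pi_s\pi_w = \pi_s^2\pi_{w'} = (a\pi_s + b)\pi_{w'} = a\pi_w + b\pi_{sw}$. The one genuinely non-routine point is the braid-relation verification for $\rho$; the rest is bookkeeping with reduced words and the quadratic relation.
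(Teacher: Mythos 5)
Your proof is correct, and it is essentially the same argument as the one the paper cites from Humphreys~\cite[Section~7]{90H}: span by rewriting non-reduced words via the quadratic relation, establish freeness by constructing the regular-type representation on $\bigoplus_{w} R e_w$ and reducing the braid-relation check to the rank-two parabolic via minimal right coset representatives, and then read off the multiplication rule from a reduced word for $w$ (appending or stripping $s$). The paper itself states the theorem without proof, deferring to this standard construction, so there is nothing further to compare.
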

For later use, we note that 
\[\pi_w \pi_s
=\begin{cases}
\pi_{ws}          &\text{ if } \ell(ws)>\ell(w), \\ 
a\pi_w+ b\pi_{ws} & \text{ if }\ell(ws)<\ell(w)
\end{cases}\]
and $\pi_v \pi_w = \pi_{vw} \text{ if }  \ell(v)+ \ell(w) = \ell(vw)$ for $v,w \in W$.

The two variable Hecke algebras were studied intensively in~\cite[Section 6]{BLL12}
in the case where $R = \mathbb{C}$ and $W = \SG_n$. 
Replacing $\SG_n$ with $W$ in the proof of~\cite[Proposition 6.1]{BLL12} gives the following  classification.

\begin{theorem}{\rm (cf.~\cite[Proposition 6.1]{BLL12})}\label{thm: hecke algebra specialization}
Let $R = \mathbb{C}$ and $a,b \in \mathbb C$.
Then $H_W(a,b)$ is isomorphic to one of the following four families of algebras:
\begin{itemize}
    \item a Hecke algebra $H_W(\nu)$ at a generic value of $\nu$, or
    \item a Hecke algebra $H_W(\xi)$ at a root $\xi$ of unity , or
    \item the $0$-Hecke algebra $H_W(0)$ (when $a \ne 0$ but $b=0$), or
    \item the nil-Coxeter algebra $N_W$ (when $a=b=0$).
\end{itemize}
\end{theorem}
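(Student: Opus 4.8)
The plan is to reduce the statement to the known classification for type $A$ in~\cite[Proposition 6.1]{BLL12} by verifying that every ingredient of that proof is type-independent. First I would recall the structure of the argument in~\cite{BLL12}: one starts from the quadratic relation $\pi_s^2 = a\pi_s + b$, which over $\mathbb{C}$ factors as $(\pi_s - \alpha)(\pi_s - \beta) = 0$ where $\alpha, \beta$ are the two roots of $t^2 - at - b = 0$. The four cases arise from the position of these roots: $\alpha \neq \beta$ with neither being $0$ (a Hecke algebra, generic or at a root of unity depending on whether $-\alpha/\beta$ is a root of unity), exactly one root equal to $0$ (i.e.\ $b = 0$, $a \neq 0$, giving after rescaling $\pi_s \mapsto \pi_s/a$ the $0$-Hecke algebra), and $\alpha = \beta = 0$ (i.e.\ $a = b = 0$, the nil-Coxeter algebra). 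This case division depends only on the pair $(a,b) \in \mathbb{C}^2$, not on $W$.

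Next I would make the rescaling and change-of-generator maps explicit and check they respect the braid relations, which hold verbatim for every finite Coxeter system. Concretely: if $b \neq 0$ and $\alpha \neq \beta$, pick a square root and set $T_s := c\,\pi_s + d$ for suitable $c, d \in \mathbb{C}$ so that $T_s$ satisfies $(T_s - \nu)(T_s + 1) = 0$ for a parameter $\nu$ determined by $-\alpha/\beta$; since each $T_s$ is an invertible affine-linear function of $\pi_s$, the assignment $\pi_s \mapsto T_s$ extends to an algebra isomorphism $H_W(a,b) \to H_W(\nu)$ because it sends the quadratic relation and the (parameter-free) braid relations of one algebra to those of the other, and both algebras are free $R$-modules of rank $|W|$ with the $\pi_w$ basis by Theorem~\ref{thm: left-right actions}, so a bijection on generators respecting relations is automatically a bijection. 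The genericity of $\nu$ versus it being a root of unity is then a dichotomy on the single scalar $\nu$. When $b = 0, a \neq 0$, the map $\pi_s \mapsto a^{-1}\pi_s$ carries $H_W(a,0)$ to $H_W(1,0) = H_W(0)$, and when $a = b = 0$ there is nothing to do: $H_W(0,0)$ \emph{is} the nil-Coxeter algebra $N_W$ by definition.

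The main obstacle, such as it is, is purely bookkeeping: one must confirm that nothing in~\cite[Proposition 6.1]{BLL12} secretly used special features of $\SG_n$ — for instance, a particular normalization of the Hecke algebra, or the fact that all simple reflections in type $A$ are conjugate. The conjugacy point is the only genuinely type-$A$-flavored ingredient, but it is irrelevant here because $H_W(a,b)$ by construction already assigns the \emph{same} parameters $(a,b)$ to every $s \in S$, so the two-sided ideal generated by the $\pi_s^2 - a\pi_s - b$ is uniform across $S$ regardless of conjugacy classes in $W$. Hence I would state the proof as: ``The proof of~\cite[Proposition 6.1]{BLL12} uses only the quadratic relation, the braid relations, and the freeness of $H_W(a,b)$ as an $R$-module with basis $\{\pi_w : w \in W\}$ (Theorem~\ref{thm: left-right actions}), all of which hold for an arbitrary finite Coxeter system $(W,S)$; replacing $\SG_n$ by $W$ throughout yields the claim,'' and then record the four cases with their explicit generator substitutions as above for the reader's convenience.
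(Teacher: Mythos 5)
Your proposal matches the paper's approach exactly: the paper offers no argument beyond the sentence ``Replacing $\SG_n$ with $W$ in the proof of~\cite[Proposition 6.1]{BLL12} gives the following classification,'' and your verification that the quadratic relation, the braid relations, and the freeness of $H_W(a,b)$ on $\{\pi_w : w \in W\}$ are the only inputs is precisely the justification the authors leave implicit. One small bookkeeping point: your case division omits the degenerate possibility $\alpha = \beta \neq 0$ (i.e.\ $a^2 + 4b = 0$ with $a \neq 0$); it is handled by the same affine rescaling and lands in the ``root of unity'' family with $\nu = -1$, so the argument is unaffected, but you should list it.
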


For an $R$-algebra $A$, 
let $\module(A)$ be the category of left $A$-modules.
Let $M \in \module(H_{W_I})$ for some $I \subseteq S$. 
Then the \emph{induction} of $M$ to $H_W$ is defined by
$$
M \big\uparrow ^{H_W}_{H_{W_I}} := H_W \underset{H_I}{\otimes}M,
$$
where we are viewing $H_W$ as an $(H_W, H_{W_I})$-bimodule.
If $N \in \module(H_W)$, we write the corresponding $H_{W_I}$-module as 
$$
N \big\downarrow ^{H_W}_{H_{W_I}}
$$
and call it the \emph{restriction} of $N$ to $H_{W_I}$.

For an algebra homomorphism $f: A \rightarrow B$ and $M \in \module(B)$, the following defines a left $A$-module structure on $M$.
\begin{equation}\label{symbol for action}
a\cdot^{f} m := f(a) \cdot m
\end{equation}
Thus, $f$ induces a functor $F: \module(B) \rightarrow \module(A)$. For example, the functor induced by the inclusion $\iota: H_{W_I} \rightarrow H_W$ is the restriction functor.

We close this section by introducing the notion of (anti-)automorphism twists.
Let $\mu: B \to A$ be a morphism of associative algebras. Given a $A$-module $M$, we define $\mu[M]$ be a $B$-module with the same underlying vector space as $M$ and with the action $\cdot^{\mu}$ twisted by $\mu$ in such a way that 
\begin{align*}
b \cdot^{\mu} v := \mu(b) \cdot v \quad \text{for $a \in A$ and $v \in M$}.
\end{align*}
This induces a covariant functor 
\[
F_{\mu}: \module A \ra \module B, \quad M \mapsto \mu[M],
\]
where $F_{\mu}(h):\mu[M] \to \mu[N], m \mapsto h(m)$ for every $A$-module homomorphism $h: M\to N$. 
We call $F_{\mu}$ the \emph{$\mu$-twist}.
For example, the functor induced by the inclusion $\iota: H_{W_I} \rightarrow H_W$ is the restriction functor.

Similarly, given an anti-homomorphism $\nu: B \to A$ of associative algebras,
we define $\nu[M]$ to be the $B$-module with $M^*$, the dual space of $M$, as the underlying space and with the action $\cdot^{\beta}$ defined by
\begin{align}\label{eq: anti-automorphism twist}
(a \cdot^{\nu} \delta) (v) := \delta(\nu (a) \cdot v) \quad \text{for $a \in A$, $\delta \in M^*$, and $v \in M$}.
\end{align}
Any anti-automorphism $\nu$ of $A$ induces a contravariant functor
\[
G_{\nu}: \module A \ra \module B, \quad M \mapsto \nu[M], 
\]
where $G_{\nu}(h):\nu[N] \to \nu[M], \delta \mapsto \delta \circ h$
for every $A$-module homomorphism $h: M\to N$.
We call $G_{\nu}$ the \emph{$\nu$-twist}.

\section{The Mackey decomposition formula for two variable Hecke algebras}
\label{Sec.The Mackey formula for two variable Hecke algebras}
Let $(W, S)$ be a finite Coxeter system. We denote by $W^I$ (resp. ${}^{I}W$) 
the set of minimal length left (resp. right) coset representatives of $W_I$ in $W$.

For any $w \in W$, it is well known that $w$ can be uniquely factorized as $xy$ for $x \in W^I$ and $y \in W_I$ (for instance, see~\cite[Proposition 2.4.4]{05BB}). 
In this case, it holds that  $\ell(w) = \ell(x)+\ell(y)$. 
Let us call this factorization as the \emph{reduced factorization of $w$ with respect to $W/W_I$}.
The reduced factorization with respect to $W_I \backslash W$ can be defined in a similar manner.

For $I, J \subseteq S$, let $C$ be a $(W_J, W_I)$-double coset in $W_J \backslash W / W_I$. 
It is well known that there exists a unique minimal length element $\tau$ in $C$. 
Let ${}^{J}W^{I} $ be the set of minimal length representatives of all the double cosets in $W_J \backslash W / W_I$, which has the following nice characterization.

\begin{lemma}{\rm (\cite[Chapter 4]{B02ch4-6})}  \label{lem: minimal representatives of double coset} For $I, J \subseteq S$,
$$
{}^{J}W^{I} = {}^{J}W \cap W^{I} = \{w \in W \mid J \subseteq Asc_{L}(w) \text{ and } I \subseteq Asc_{R}(w) \}
$$
where $Asc_{L}(w):= \{s \in S \mid \ell(sw) = \ell(w) +1 \}$ and $Asc_{R}(w):= \{s \in S \mid \ell(ws) = \ell(w) +1 \}$.
\end{lemma}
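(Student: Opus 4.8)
The plan is to prove the two claimed equalities in turn, and the bulk of the work goes into understanding the interaction of left and right descent sets with double cosets. First I would recall the standard fact (from \cite[Proposition 2.4.4]{05BB}, already cited above) that every $w \in W$ factors uniquely as $w = xy$ with $x \in W^I$, $y \in W_I$, and $\ell(w) = \ell(x) + \ell(y)$; dually there is the factorization $w = y'x'$ with $x' \in {}^{I}W$, $y' \in W_I$. These immediately give the characterizations $W^I = \{w \in W \mid I \subseteq Asc_R(w)\}$ and ${}^{J}W = \{w \in W \mid J \subseteq Asc_L(w)\}$: indeed $w \in W^I$ iff $\ell(ws) > \ell(w)$ for all $s \in I$, which is the exchange/deletion-condition statement that $w$ has no reduced word ending in a letter of $I$. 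So the right-hand characterization set in the lemma equals $\{w \mid J \subseteq Asc_L(w)\} \cap \{w \mid I \subseteq Asc_R(w)\} = {}^{J}W \cap W^I$, which already disposes of the second equality for free. It remains to prove ${}^{J}W^{I} = {}^{J}W \cap W^I$.

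For the inclusion ${}^{J}W^{I} \subseteq {}^{J}W \cap W^I$: let $\tau$ be the minimal-length element of a double coset $C = W_J w W_I$. If $\tau \notin W^I$ then $\ell(\tau s) < \ell(\tau)$ for some $s \in I$, but $\tau s \in C$, contradicting minimality; symmetrically $\tau \in {}^{J}W$ using some $t \in J$ with $\ell(t\tau) < \ell(\tau)$. For the reverse inclusion ${}^{J}W \cap W^I \subseteq {}^{J}W^{I}$, I take $w \in {}^{J}W \cap W^I$ and must show $w$ is the \emph{unique} minimal-length element of its double coset $C = W_J w W_I$, i.e. that $\ell(uwv) \ge \ell(w)$ for all $u \in W_J$, $v \in W_I$, with equality forcing $uwv = w$. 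The key step here is the sublemma: if $w \in {}^{J}W \cap W^I$ and $s \in I$, then $ws \in {}^{J}W$ unless $ws = tw$ for some $t \in J$ (and in the exceptional case $\ell(tw) = \ell(ws) = \ell(w)+1$). This is the classical "double coset exchange" argument --- one writes out reduced words and applies the exchange condition on both sides; it is the analogue of the lemma used to set up Mackey-type decompositions, and is exactly the technical heart of the proof.

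Granting the sublemma, the reverse inclusion follows by an induction on $\ell(v)$: writing $v = v's$ with $\ell(v's) = \ell(v') + 1$ and $s \in I$, we have $wv' \in {}^{J}W$ by the induction hypothesis (after checking it also lies in $W^I$, or more carefully tracking which factor the exchange moves to), and then either $\ell(wv's) = \ell(wv') + 1 > \ell(wv')\ge \ell(w)$, or the exchange moves a letter to the left into $W_J$, in which case $uwv$ has a shorter expression and we recurse; a parallel induction handles the left multiplications by $u \in W_J$. The conclusion is that the minimal length in $C$ is $\ell(w)$ and is attained only at $w$, so $w = \tau$. Combining both inclusions gives ${}^{J}W^{I} = {}^{J}W \cap W^I$, and together with the descent-set characterization established in the first paragraph, the lemma is proved. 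The main obstacle, as indicated, is the careful bookkeeping in the double-coset exchange sublemma --- keeping track of whether an exchanged letter ends up on the $W_J$ side or the $W_I$ side --- and this is precisely where one must invoke the strong exchange condition rather than just length additivity.
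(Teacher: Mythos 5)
The paper does not actually prove this lemma --- it only cites \cite[Chapter 4]{B02ch4-6} --- so there is no in-text argument to compare against; your proposal is judged on its own. The overall structure is the standard one and is sound: identifying $W^I$ (resp.\ ${}^{J}W$) with $\{w \mid I \subseteq Asc_{R}(w)\}$ (resp.\ $\{w \mid J \subseteq Asc_{L}(w)\}$) settles the second equality, and minimality of $\tau$ in its double coset immediately forces $\tau \in {}^{J}W \cap W^{I}$. The only step with real content is the reverse inclusion ${}^{J}W \cap W^{I} \subseteq {}^{J}W^{I}$, and this is exactly where your write-up is both heavier than necessary and genuinely incomplete: you route it through a Deodhar-type exchange sublemma plus a double induction, and you yourself flag that you have not tracked which side the exchanged letter lands on. A much shorter argument avoids the exchange condition entirely: for $w \in {}^{J}W \cap W^{I}$ the unique reduced factorizations you already quoted give $\ell(uw) = \ell(u) + \ell(w)$ for all $u \in W_J$ and $\ell(wv) = \ell(w) + \ell(v)$ for all $v \in W_I$; then $\ell(uwv) \ge \ell(uw) - \ell(v) = \ell(u) + \ell(w) - \ell(v)$ and $\ell(uwv) \ge \ell(wv) - \ell(u) = \ell(w) + \ell(v) - \ell(u)$, and summing yields $\ell(uwv) \ge \ell(w)$. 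Hence $w$ has minimal length in $W_J w W_I$ and so lies in ${}^{J}W^{I}$ --- note that uniqueness of the minimal element (which the paper takes as known) is not even needed for membership, so your insistence on ``equality forcing $uwv = w$'' is superfluous. If you prefer to keep the exchange-lemma route, the sublemma as stated is correct (it is Deodhar's lemma applied on the right relative to $W_J$), but you would then need to carry out the induction in detail rather than gesture at it.
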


For $I, J \subseteq S$ and $w \in W$, let
\begin{align*}
    K(w) &:= J \cap (w I w^{-1}), \\
    w^{-1} K(w) w &:= (w^{-1}Jw) \cap I.
\end{align*}

\begin{lemma}{\rm (\cite[Proposition 8.3]{GS84}, \cite[Theorem 1.2]{C85}, \cite[Corollary 3]{BKPST18}, \cite[Section 1.10]{90H})}
\label{lem: triple reduced expression}
Let $I, J \subseteq S$ and $\tau \in {}^{J}W^{I}$.
Then, for any element $u \in W_J$, $u\tau \in W^I$ if and only if $u \in W^{K(\tau)}_J$.
Consequently, every element of $W_J \tau W_I$ can be written uniquely as $u \tau v$, where $u \in W^{K(\tau)}_J, v \in W_I$, and $\ell(u \tau v)=\ell(u)+\ell(\tau)+\ell(v)$.
\end{lemma}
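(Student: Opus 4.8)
The plan is to establish the ``iff'' statement first and then read off the ``consequently'' part by routine manipulation of coset representatives. The tools I would use are the standard length identities for minimal-length coset representatives (see, e.g.,~\cite{05BB}): because $\tau\in{}^{J}W$, one has $\ell(u\tau)=\ell(u)+\ell(\tau)$ for every $u\in W_J$; because $\tau\in W^I$, one has $\ell(\tau v)=\ell(\tau)+\ell(v)$ for every $v\in W_I$, and more generally $\ell(xv)=\ell(x)+\ell(v)$ whenever $x\in W^I$ and $v\in W_I$; the identity $W_J\cap S=J$; the parabolic factorization of the Coxeter system $(W_J,J)$ along its standard parabolic $W_{K(\tau)}$, which is legitimate since $K(\tau)=J\cap\tau I\tau^{-1}\subseteq J$; and the strong exchange condition. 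I would also record at the outset that $K(\tau)\subseteq\tau I\tau^{-1}$ forces $W_{K(\tau)}\subseteq\tau W_I\tau^{-1}$, so that every $k\in W_{K(\tau)}$ has the form $\tau v\tau^{-1}$ with $v\in W_I$.

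For the ``only if'' direction of the first assertion I would argue by contraposition: if $u\notin W^{K(\tau)}_J$, there is $s\in K(\tau)$ with $\ell(us)<\ell(u)$; writing $s=\tau t\tau^{-1}$ with $t\in I$ gives $u\tau t=us\tau$, hence $\ell(u\tau t)=\ell(us)+\ell(\tau)<\ell(u\tau)$, contradicting $u\tau\in W^I$. For the ``if'' direction I would assume $u\in W^{K(\tau)}_J$ and suppose, for contradiction, that $\ell(u\tau s)<\ell(u\tau)$ for some $s\in I$. Concatenating reduced words for $u$ and $\tau$ yields a reduced word for $u\tau$ of length $\ell(u)+\ell(\tau)$; by the strong exchange condition, $u\tau s$ equals this word with one letter removed, lying either in the $\tau$-part or in the $u$-part. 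The $\tau$-part case is impossible, since cancelling $u$ on the left would give $\ell(\tau s)<\ell(\tau)$, against $\tau\in W^I$. In the $u$-part case we obtain $u'\in W_J$ with $\ell(u')=\ell(u)-1$, and $g:=u^{-1}u'$ satisfies $g\tau=\tau s$; comparing lengths via $\ell(g\tau)=\ell(g)+\ell(\tau)$ and $\ell(\tau s)=\ell(\tau)+1$ forces $\ell(g)=1$, so $g\in W_J\cap S=J$, and since $g=\tau s\tau^{-1}\in\tau I\tau^{-1}$ we get $g\in K(\tau)$. But then $u'=ug\in uW_{K(\tau)}$ with $\ell(u')<\ell(u)$, contradicting $u\in W^{K(\tau)}_J$. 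Hence $u\tau\in W^I$.

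Granting the first assertion, the consequence is short. Given $w\in W_J\tau W_I$, write $w=u'\tau v'$ with $u'\in W_J$, $v'\in W_I$; factor $u'=uk$ with $u\in W^{K(\tau)}_J$ and $k\in W_{K(\tau)}$; then use $W_{K(\tau)}\subseteq\tau W_I\tau^{-1}$ to write $k=\tau v_k\tau^{-1}$, so that $w=u\tau(v_kv')$ is of the desired form with $v:=v_kv'\in W_I$. Length additivity follows from $u\tau\in W^I$ (first assertion) together with $\ell(u\tau)=\ell(u)+\ell(\tau)$. For uniqueness, $u_1\tau v_1=u_2\tau v_2$ forces $u_1\tau W_I=u_2\tau W_I$; since $u_1\tau$ and $u_2\tau$ both lie in $W^I$, they are the unique minimal-length element of that left coset, so $u_1\tau=u_2\tau$, giving $u_1=u_2$ and then $v_1=v_2$.

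The step I expect to require the most care is the exchange-condition case analysis in the ``if'' direction, in particular excluding the $\tau$-part and pinning down $g=u^{-1}u'$ as an element of $K(\tau)$; the remainder is formal bookkeeping. One could instead first establish (or quote from the cited references~\cite{GS84,C85,BKPST18,90H}) the Kilmoyer--Howlett identity $W_J\cap\tau W_I\tau^{-1}=W_{K(\tau)}$ and deduce the ``iff'' statement from it, but the self-contained route above seems cleaner.
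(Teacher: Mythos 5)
Your proof is correct. Note that the paper itself does not prove this lemma at all: it is quoted directly from the cited references (Geck--Pfeiffer, Curtis, Bui--Kim--Pham--Stanton--Tirabassi, Humphreys), so there is nothing in the paper's text to compare against line by line. Your self-contained argument is therefore a genuine addition. The ``only if'' direction via contraposition is the standard one. The ``if'' direction is the delicate step and you handle it correctly: the exchange condition applied to the reduced word obtained by concatenating $u$ and $\tau$ splits into a $\tau$-deletion case (ruled out because $\tau\in W^I$) and a $u$-deletion case, where the element $g=u^{-1}u'$ is forced to have length one by comparing $\ell(g\tau)=\ell(g)+\ell(\tau)$ with $\ell(\tau s)=\ell(\tau)+1$, and then $g\in W_J\cap S\cap\tau I\tau^{-1}=K(\tau)$ contradicts minimality of $u$ in $uW_{K(\tau)}$. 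The existence part of the ``consequently'' clause correctly rewrites $u'=uk$ with $k\in W_{K(\tau)}\subseteq\tau W_I\tau^{-1}$ and folds $k$ into the right factor, and uniqueness via the uniqueness of the minimal-length left $W_I$-coset representative is clean. One cosmetic remark: since $s$ is a simple reflection you only need the ordinary exchange condition, not the strong one; and the alternative route you mention --- deducing everything from the Kilmoyer identity $W_J\cap\tau W_I\tau^{-1}=W_{K(\tau)}$ --- is in fact the route taken in several of the cited sources, so it would be a legitimate shortcut if you wanted to lean on them.
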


From now on, when we are dealing with two variable Hecke algebras associated with paraboic subalgebras, 
we simply write $H_I$ for $H_{W_I}(a, b)$ for each $I \subseteq S$.
The subsequent Lemma plays a key role in the proof of our main result.

\begin{lemma}\label{lem: conjugate homomorphism}
Let $I, J \subseteq S$ and $w \in W$.  
Then the map 
\[c_w:H_{K(w)} \rightarrow
H_{w^-1 K(w) w}, \quad \pi_{k} \mapsto \pi_{w^{-1} k w} \quad (k \in K(w))
\]
is an $R$-algebra isomorphism.
In particular, if $w \in {}^{J}W^{I}$ and $\kappa \in W_{K(w)}$, then
$$
\pi_{\kappa} \pi_w = \pi_w c_w(\pi_{\kappa}).
$$
\end{lemma}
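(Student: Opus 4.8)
The plan is to prove both assertions by reducing everything to the multiplicative behaviour of the $\pi_w$ recorded in Theorem~\ref{thm: left-right actions}. For the first assertion, note that conjugation by $w$ carries the Coxeter system $(W_{K(w)}, K(w))$ isomorphically onto $(W_{w^{-1}K(w)w}, w^{-1}K(w)w)$: indeed $K(w) = J \cap wIw^{-1}$ and $w^{-1}K(w)w = (w^{-1}Jw)\cap I$ are related by $k \mapsto w^{-1}kw$, and this bijection of generating sets preserves orders of products $(kk')$ since $(w^{-1}kw)(w^{-1}k'w) = w^{-1}(kk')w$ has the same order as $kk'$ in $W$. Hence the assignment $\pi_k \mapsto \pi_{w^{-1}kw}$ sends the braid relations of $H_{K(w)}$ to those of $H_{w^{-1}K(w)w}$, and the quadratic relation $\pi_k^2 = a\pi_k + b$ to $\pi_{w^{-1}kw}^2 = a\pi_{w^{-1}kw} + b$, so $c_w$ is a well-defined algebra homomorphism by the universal property of the presentation. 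The same formula with $w$ replaced by $w^{-1}$ (note $w^{-1}K(w)w$ plays for $w^{-1}$ the role that $K(w)$ plays for $w$) gives a two-sided inverse, so $c_w$ is an isomorphism.

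For the second assertion, fix $w \in {}^{J}W^{I}$ and $\kappa \in W_{K(w)}$; I want $\pi_\kappa \pi_w = \pi_w \pi_{w^{-1}\kappa w}$, since $c_w(\pi_\kappa) = \pi_{w^{-1}\kappa w}$ (this identity for $\pi_\kappa$, $\kappa$ not a simple reflection, follows from the generator case because $c_w$ is an algebra map and $\pi_\kappa$ is a product of the $\pi_k$). First I would reduce to the case where $\kappa$ is a simple reflection: if $\kappa = k_1 k_2 \cdots k_r$ is a reduced word in $K(w)$, then I would induct on $r$, peeling off one generator at a time, provided I can also control lengths at each stage. The key length fact I need is that for $k \in K(w)$, one has $\ell(kw) = \ell(k) + \ell(w)$ — equivalently $w \in W^{K(w)}$ — which is exactly the content of Lemma~\ref{lem: triple reduced expression} applied with $u = k \in W^{K(w)}_J$ (using that $w \in {}^{J}W^{I}$). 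More generally for $\kappa \in W_{K(w)}$ the same lemma gives $\ell(\kappa w) = \ell(\kappa) + \ell(w)$, so $\pi_\kappa \pi_w = \pi_{\kappa w}$. On the other side, $w^{-1}\kappa w \in W_{w^{-1}K(w)w} \subseteq W_I$, so I should check $\ell(w (w^{-1}\kappa w)) = \ell(w) + \ell(w^{-1}\kappa w)$; but $\kappa$ and $w^{-1}\kappa w$ are conjugate, hence have equal length, and $w(w^{-1}\kappa w) = \kappa w$, so this equals $\ell(\kappa w) = \ell(\kappa) + \ell(w) = \ell(w) + \ell(w^{-1}\kappa w)$. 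Therefore $\pi_w \pi_{w^{-1}\kappa w} = \pi_{w(w^{-1}\kappa w)} = \pi_{\kappa w} = \pi_\kappa \pi_w$, which is the desired identity.

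Thus the proof splits cleanly: a formal check that $c_w$ respects the defining relations, a symmetry argument for invertibility, and then an application of Lemma~\ref{lem: triple reduced expression} together with the length-additivity rule $\pi_v \pi_w = \pi_{vw}$ when $\ell(v) + \ell(w) = \ell(vw)$ to collapse both sides of $\pi_\kappa \pi_w = \pi_w c_w(\pi_\kappa)$ to $\pi_{\kappa w}$. The main obstacle I anticipate is purely bookkeeping: making sure the lengths genuinely add up so that no quadratic relation ever intervenes — in particular verifying $w \in W^{K(w)}$, which is where the hypothesis $w \in {}^{J}W^{I}$ (rather than an arbitrary $w \in W$) is essential and where Lemma~\ref{lem: triple reduced expression} does the real work. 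Once that is in hand, both computations are immediate from Theorem~\ref{thm: left-right actions} and the remark following it.
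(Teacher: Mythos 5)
Your overall strategy coincides with the paper's: verify the defining relations to see that $c_w$ is an isomorphism, then collapse both sides of $\pi_\kappa\pi_w=\pi_w c_w(\pi_\kappa)$ to $\pi_{\kappa w}$ via length additivity. However, one step rests on a false principle: you assert that ``$\kappa$ and $w^{-1}\kappa w$ are conjugate, hence have equal length.'' Conjugate elements of a Coxeter group need not have equal length (in $\mathfrak{S}_3$, $s_1$ and $s_2s_1s_2$ are conjugate of lengths $1$ and $3$). The equality $\ell(\kappa)=\ell(w^{-1}\kappa w)$ does hold here, but only because conjugation by $w$ carries the simple system $K(w)\subseteq S$ onto the simple system $w^{-1}K(w)w\subseteq I\subseteq S$: a reduced word for $\kappa$ in the generators $K(w)$ is sent to a word of the same length in simple reflections, and the inverse map does the same, so lengths in $W$ are preserved. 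You established this isomorphism of Coxeter systems in your first paragraph, so the repair is at hand, but it must be invoked explicitly in place of the conjugacy claim. The paper sidesteps the issue entirely: since $w\in W^I$ and $w^{-1}\kappa w\in W_I$, the expression $\kappa w=w\cdot(w^{-1}\kappa w)$ is, by uniqueness, the reduced factorization of $\kappa w$ with respect to $W/W_I$, which yields $\pi_{\kappa w}=\pi_w\pi_{w^{-1}\kappa w}$ without ever comparing $\ell(\kappa)$ with $\ell(w^{-1}\kappa w)$.

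A second, more minor point: the fact $\ell(\kappa w)=\ell(\kappa)+\ell(w)$ for $\kappa\in W_{K(w)}$ is not an instance of Lemma~\ref{lem: triple reduced expression} with $u=k$, because a simple reflection $k\in K(w)$ lies in $W_{K(w)}$ and hence is \emph{not} a minimal coset representative in $W^{K(w)}_J$. What you actually need is that $w\in{}^{J}W$, equivalently $J\subseteq Asc_{L}(w)$, which is supplied by Lemma~\ref{lem: minimal representatives of double coset}; since $W_{K(w)}\subseteq W_J$, length additivity for $\kappa w$ then follows from the reduced factorization with respect to $W_J\backslash W$. This is precisely the citation the paper uses. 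With these two repairs your argument is complete and essentially identical to the paper's.
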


\begin{proof}
To begin with, we note that $c_w$ restricts to a bijection from $\{\pi_k: k\in K(w)\}$ to 
$\{w^{-1}\pi_kw: k\in K(w)\}$.
Suppose that $w s w^{-1},  w s' w^{-1}  \in K(w)$ for some $s, s' \in I$. 
By the definition of $K(w)$, the elements $t := w i w^{-1}, t' := w i w^{-1}$ are contained in $J$.
For any $m \in \mathbb{N} \cup \{ \infty \}$, it holds that  
\begin{align*}
w^{-1}(t t' t \ldots )_m w = (s s' s \ldots)_m  \quad \text{ and }\quad 
w^{-1}(t' t t' \ldots )_m w = (s' s s' \ldots)_m, 
\end{align*}
hence the relation $(t t' t \ldots )_m = (t' t t' \ldots )_m$ gives rise to the relation    
$(s s' s \ldots)_m = (s' s s' \ldots)_m$.
In a direct way, one can also check that  
$c_w(\pi_s) c_w(\pi_s) = a c_w(\pi_s) +b$.
Furthermore, the inverse of $c_w$ can be obtained from the mapping $\pi_{\sigma} \mapsto \pi_{w \sigma w^{-1}}$. 
Putting the above discussions together, 
we can conclude that $c_w$ is an $R$-algebra isomorphism. 

For the second assertion, note that $J \subseteq Asc_{L}(w)$
and $\pi_l \pi_w = \pi_{l w}$ for each $l \in K(w)$. 
The former follows from Lemma~\ref{lem: minimal representatives of double coset} 
and the latter from the inclusion $K(w) \subseteq J$.
Therefore, one has that 
$\pi_{\kappa} \pi_w = \pi_{\kappa w}$
for all $\kappa \in W_K(w)$.
On the other hand, since $w \in W^I$ and $w^{-1} \kappa w \in W_I$, 
$w (w^{-1} \kappa w)$ is the reduced factorization of $\kappa w \in W$ with respect to $W/W_I$,
which induces the following equality:
\[\pi_{\kappa w} = \pi_{w} \pi_{w^{-1} \kappa w}.\]
Hence, letting $s_1s_2 \ldots s_l$ be any reduced expression for $\kappa$, 
it holds that 
\[
\pi_{w^{-1} \kappa w} = \pi_{w^{-1} s_1 w}\pi_{w^{-1} s_2 w} \cdots \pi_{w^{-1} s_l w} = c_w(\pi_{\kappa}),
\]
as required.
\end{proof}

Now we are in the position to state our main result.

\begin{theorem}\label{thm: Mackey}
Let $(W, S)$ be a finite Coxeter system, and $I, J \subseteq S$, and $M \in \module (H_I)$. Then
$$
M \big\uparrow^{H_S}_{H_I} \big\downarrow^{H_S}_{H_J} \,\,\cong \,\, \underset{\tau \in {}^JW^I}{\bigoplus} c_{\tau} [M \big\downarrow^{H_I}_{H_{{\tau}^{-1} K(\tau) \tau}}]\, \big\uparrow^{H_J}_{H_{K(\tau)}}
$$
as $H_J$-modules 
where
$c_{\tau}: H_{K(\tau)} \rightarrow H_{{\tau}^{-1} K(\tau) \tau}$ is the algebra homomorphism defined by $\pi_k \rightarrow \pi_{{\tau}^{-1} k \tau   }$.
\end{theorem}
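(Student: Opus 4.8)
The plan is to follow the classical double-coset argument adapted to the two-variable setting, exploiting the basis $\{\pi_w : w \in W\}$ of $H_S$ and the triple decomposition of Lemma~\ref{lem: triple reduced expression}. First I would decompose $H_S$ as a left $H_J$-module: since $W = \bigsqcup_{\tau \in {}^JW^I} W_J\tau W_I$, and by Lemma~\ref{lem: triple reduced expression} every element of $W_J\tau W_I$ factors uniquely as $u\tau v$ with $u \in W_J^{K(\tau)}$, $v \in W_I$, and additive lengths, the set $\{\pi_{u\tau v}\}$ with indices ranging over these triples is a partition of the $R$-basis of $H_S$. Grouping the basis elements lying over a fixed $\tau$, I get an $R$-module decomposition $H_S = \bigoplus_{\tau} B_\tau$ where $B_\tau = \mathrm{span}_R\{\pi_{u\tau v} : u \in W_J^{K(\tau)},\, v \in W_I\}$. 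I would then check that each $B_\tau$ is a left $H_J$-submodule of $H_S$: using Theorem~\ref{thm: left-right actions}, multiplying $\pi_{u\tau v}$ by $\pi_s$ ($s \in J$) on the left either lengthens $u$ (staying in $W_J^{K(\tau)}$ because left multiplication by $\pi_s$ and the factorization with respect to $W_J/W_{K(\tau)}$ are compatible) or produces a linear combination of $\pi_{u\tau v}$ and $\pi_{su\tau v}$, both of which keep the same $\tau$. The only subtlety is to confirm that $su$ still lies in $W_J^{K(\tau)}$ when $\ell(su)<\ell(u)$; this follows because $su \in W_J$ and the coset decomposition $W_J = W_J^{K(\tau)} W_{K(\tau)}$ is preserved under the exchange condition, so left descents of elements of $W_J^{K(\tau)}$ stay in $W_J^{K(\tau)}$.

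Next, after tensoring, $M\big\uparrow^{H_S}_{H_I}\big\downarrow^{H_S}_{H_J} = H_S \otimes_{H_I} M = \bigoplus_\tau (B_\tau \otimes_{H_I} M)$ as $H_J$-modules, since the $B_\tau$ are right $H_I$-submodules as well (right multiplication by $\pi_s$, $s \in I$, preserves $\tau$ for the mirror-image reason, lengthening or shortening $v$). So it remains to identify $B_\tau \otimes_{H_I} M$ with $c_\tau\big[M\big\downarrow^{H_I}_{H_{\tau^{-1}K(\tau)\tau}}\big]\big\uparrow^{H_J}_{H_{K(\tau)}}$. The right-hand side is $H_J \otimes_{H_{K(\tau)}} c_\tau[M']$ where $M' = M\big\downarrow^{H_I}_{H_{\tau^{-1}K(\tau)\tau}}$. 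I would define the map $H_J \otimes_{H_{K(\tau)}} c_\tau[M'] \to B_\tau \otimes_{H_I} M$ by $h \otimes m \mapsto h\pi_\tau \otimes m$. This is well defined precisely because of the last assertion of Lemma~\ref{lem: conjugate homomorphism}: for $\kappa \in W_{K(\tau)}$, $\pi_\kappa \pi_\tau = \pi_\tau c_\tau(\pi_\kappa)$, so $h\pi_\kappa \otimes m = h\pi_\tau c_\tau(\pi_\kappa) \otimes m = h\pi_\tau \otimes c_\tau(\pi_\kappa)m = h\pi_\tau \otimes (\kappa \cdot^{c_\tau} m)$, matching the balancing relation of the tensor product over $H_{K(\tau)}$. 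Surjectivity is clear since $B_\tau = H_J \pi_\tau H_I$ (spanned by $\pi_u \pi_\tau \pi_v$), and the image contains all $\pi_u\pi_\tau \otimes \pi_v m = \pi_u\pi_\tau v \otimes m$.

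For injectivity, hence bijectivity, I would exhibit the inverse explicitly or use a dimension/freeness count. The cleanest route: $B_\tau$ is free as a right $H_I$-module with basis $\{\pi_u\pi_\tau : u \in W_J^{K(\tau)}\}$ — this follows from Lemma~\ref{lem: triple reduced expression}, which gives $\pi_{u\tau v} = \pi_u\pi_\tau\pi_v$ and unique factorization — so $B_\tau \otimes_{H_I} M \cong \bigoplus_{u \in W_J^{K(\tau)}} \pi_u\pi_\tau \otimes M$ as $R$-modules. Likewise $c_\tau[M']$ is just $M$ as an $R$-module, and $H_J$ is free as a right $H_{K(\tau)}$-module with basis $\{\pi_u : u \in W_J^{K(\tau)}\}$, so the right-hand side is $\bigoplus_{u} \pi_u \otimes M$ as $R$-modules. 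Under these identifications the map $h\otimes m \mapsto h\pi_\tau \otimes m$ sends $\pi_u \otimes m \mapsto \pi_u\pi_\tau \otimes m$, which is visibly an $R$-module isomorphism on each summand. Finally I would verify $H_J$-equivariance: for $s \in J$, $s \cdot (h\otimes m) = \pi_s h \otimes m \mapsto \pi_s h \pi_\tau \otimes m = s\cdot(h\pi_\tau \otimes m)$, which is immediate from associativity.

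I expect the main obstacle to be the bookkeeping in the first step: proving rigorously that each $B_\tau$ is simultaneously a left $H_J$-submodule and a right $H_I$-submodule, i.e., that left multiplication by $\pi_s$ ($s\in J$) and right multiplication by $\pi_s$ ($s\in I$) never change the double coset representative $\tau$ and keep the outer factors $u, v$ within $W_J^{K(\tau)}$ and $W_I$ respectively. This reduces to a careful application of the exchange/deletion condition together with the coset decompositions, and is where one must be most attentive — in particular, the claim that $W_J^{K(\tau)}$ is closed under taking left descents (which is the Coxeter-combinatorial heart of the matter, essentially the statement that $W_J^{K(\tau)}$ is a lower order ideal is false in general, so one instead uses that $su \in W_J^{K(\tau)}$ whenever $u \in W_J^{K(\tau)}$ and $\ell(su) < \ell(u)$, which does hold). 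Everything downstream is then formal manipulation of tensor products and the already-established Lemma~\ref{lem: conjugate homomorphism}.
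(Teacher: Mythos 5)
Your proposal is correct and follows essentially the same route as the paper: the isomorphism is the very same map $h\otimes m\mapsto h\pi_\tau\otimes m$, made well defined by the identity $\pi_\kappa\pi_\tau=\pi_\tau c_\tau(\pi_\kappa)$ of Lemma~\ref{lem: conjugate homomorphism}, with the triple factorization of Lemma~\ref{lem: triple reduced expression} doing all the combinatorial work. The only cosmetic difference is that the paper proves bijectivity by writing down the explicit inverse $\Phi(\pi_{z\hat\tau y}\otimes m)=E_{\hat\tau}(\pi_z\otimes \pi_y\cdot m)$ rather than via your freeness count, and your announced ``main obstacle'' is lighter than you fear: $B_\tau$ is simply the span of the $\pi_w$ with $w$ in the double coset $W_J\tau W_I$, which Theorem~\ref{thm: left-right actions} manifestly preserves under left multiplication by $\pi_s$ ($s\in J$) and right multiplication by $\pi_s$ ($s\in I$), with no need to track whether $W_J^{K(\tau)}$ is closed under left descents.
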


\begin{proof}
For any $w \in W$,  by Lemma~\ref{lem: triple reduced expression}, there exists a unique triple $(z,\hat\tau, y)$ with   
$z \in W^{K(\tau)}_{J}, \hat \tau \in {}^{J}W^{I}, y \in W_I$ such that 
\begin{equation}\label{eq: triple factoriation of w}
w = z \hat\tau y \quad \text{ and } \quad \ell(w)=\ell(z)+\ell(\hat\tau)+\ell(y).
\end{equation}
For this factorization, it is clear that 
$\pi_{w} = \pi_{z} \pi_{\hat\tau} \pi_{y}.$
Consider the map 
\begin{align*}
\Phi: H_S \underset{H_I}{\otimes}M
&\rightarrow 
\underset{\tau \in {}^JW^I}{\bigoplus} 
H_J \underset{H_{K(\tau)}}{\otimes} c_{\tau}[M \big\downarrow^{H_I}_{H_{{\tau}^{-1} K(\tau) \tau}}]\\
\pi_w \otimes m &\mapsto E_{\hat\tau}(\pi_z \otimes \pi_y \cdot m) \quad (w \in W, \, m \in M),
\end{align*}
where $w=z \hat\tau y$ is the decomposition given by~\eqref{eq: triple factoriation of w}
and, for any $\tau\in {}^JW^I$ and $p\in H_J \underset{H_{K(\tau)}}{\otimes} c_{\tau}[M \downarrow^{H_I}_{H_{{\tau}^{-1} K(\tau) \tau}}]$, the notation $E_{\tau}(p)$ 
denotes the element in the codomain of $\Phi$
such that the $\tau$th entry is $p$ and the other entries are all zero, i.e., 
\[E_{\tau}(p):=(0, 0,\ldots, \stackunder{$\underbrace{p}$}{\scriptsize{$\tau$th}}, \ldots, 0,0).\]
We claim that $\Phi$ is well-defined, more precisely, 
\[\Phi(\pi_{w} \pi_i \otimes m)  = \Phi(\pi_{w} \otimes ( \pi_{i} \cdot m)) \quad \text{ for all } i\in I.\]
By Theorem~\ref{thm: left-right actions}, 
\begin{align*}
    \pi_y \pi_i =
    \begin{cases}\pi_{yi} &\text{ if } \ell(yi) > \ell(y),   \\
    a \pi_y + b \pi_{yi} &\text{ if } \ell(yi) < \ell(y)
\end{cases}
\end{align*}
for all $i \in I$.
If $\ell(yi) > \ell(y)$, then 
\begin{align*}
  \pi_{w} \pi_i \otimes m=\pi_{z} \pi_{\hat\tau} \pi_{y} \pi_{i} \otimes m
    = \pi_{z} \pi_{\hat\tau} \pi_{yi} \otimes m=
    \pi_{z} \pi_{\hat\tau} \otimes (\pi_{yi} \cdot m).
\end{align*}
Hence
\begin{align*}
    \Phi(\pi_{w} \pi_i \otimes m) 
    &= E_{\hat\tau}(\pi_{z} \otimes (\pi_{yi} \cdot m))\\
    &= E_{\hat\tau}(\pi_{z} \otimes (\pi_{y} \cdot (\pi_{i} \cdot m)))\\
    &= \Phi(\pi_{z} \pi_{\hat\tau} \pi_{y} \otimes ( \pi_{i} \cdot m))\\
    &= \Phi(\pi_{w} \otimes ( \pi_{i} \cdot m)).
\end{align*}
In a similar way as above, it can be seen that the same result holds 
when $\ell(yi) < \ell(y)$. Thus the claim is verified.

Next, we consider the map
\begin{align*}
\Psi: \underset{\tau \in {}^JW^I}{\bigoplus} 
 H_J \underset{H_{K(\tau)}}{\otimes} c_{\tau}[M \big\downarrow^{H_I}_{H_{{\tau}^{-1} K(\tau) \tau}}]
&\rightarrow
H_S \underset{H_I}{\otimes}M\\
 E_{\hat\tau}(\pi_{\xi} \otimes m) & \mapsto \pi_{\xi} \pi_{\tau}  \otimes m \quad (\xi \in W_J, \,m \in M).
\end{align*}
This map is also well defined since, for every $\kappa \in W_{K(\tau)}$,
\begin{align*}
    \Psi(E_{\tau}(\pi_{\xi} \pi_{\kappa} \otimes m)) &= \pi_{\xi} \pi_{\kappa} \pi_{\tau} \otimes m\\
    &= \pi_{\xi} \pi_{\tau} c_{\tau}(\pi_{\kappa}) \otimes m\\
    &= \pi_{\xi} \pi_{\tau} \otimes c_{\tau}(\pi_{\kappa}) \cdot m\\
    &= \Psi( E_{\tau}(\pi_{\xi} \otimes \pi_{\kappa } \cdot^{c_{\tau}} m)).
\end{align*}
Here the second equality follows from Lemma~\ref{lem: conjugate homomorphism}.
For the definition of $\cdot^{c_{\tau}}$ in the final term, see $\eqref{symbol for action}$.

We will show that $\phi$ and $\Psi$ are inverses to each other.
To do this, choose an arbitrary element $\xi$ in $W_J$. 
Let 
\begin{align}\label{eq: reduced expression for W_J}
\xi = z \kappa, \text{ where } z \in W^{K(\tau)}_{J} \text{ and } \kappa \in W_{K(\tau)},
\end{align}
be the reduced factorization of $\xi$ with respect to $W_J / W_{K(\tau)}$. 
Then $\pi_{\xi} = \pi_{z} \pi_{\kappa}$.
Using this factorization, we can show that, for any $m \in M$ and $\tau \in {}^{J}W^{I}$,   
\begin{align*}
   \Psi( E_{\tau}(\pi_{\xi} \otimes m)) = \pi_{\xi} \pi_{\tau} \otimes m
    = \pi_{z} \pi_{\kappa} \pi_{\tau} \otimes m
    = \pi_z \pi_{\tau} c_{\tau}(\pi_{\kappa}) \otimes m. 
\end{align*}
Here the third equality follows from Lemma~\ref{lem: conjugate homomorphism}.
Hence, 
\begin{align*}
    \Phi \circ \Psi( E_{\tau}(\pi_{\xi} \otimes m))
    &= E_{\tau}(\pi_z  \otimes (c_{\tau}(\pi_{\kappa}) \cdot m))\\
    &= E_{\tau}(\pi_z  \otimes \pi_{\kappa} \cdot^{c_{\tau}} m)\\
    &= E_{\tau}(\pi_z \pi_{\kappa}  \otimes m)\\
    &= E_{\tau}(\pi_{\xi}  \otimes m).
\end{align*}
In a similar manner as above, one can show that, for any $w \in W$ and $m \in M$, 
\begin{align*}
    \Psi \circ \Phi(\pi_w \otimes m) &= \Psi \circ \Phi(\pi_{z} \pi_{\hat\tau} \pi_{y} \otimes m)\\
    &= \Psi ( E_{\hat\tau}(\pi_{z} \otimes   (\pi_{y} \cdot m) ))\\
    &= \pi_{z} \pi_{\hat\tau} \otimes  \pi_{y} \cdot m \\
    &= \pi_{z} \pi_{\hat\tau} \pi_{y} \otimes m \\
    &= \pi_w \otimes m,
\end{align*}
where $w=z \hat\tau y$ is given by~\eqref{eq: triple factoriation of w}.
So we are done.

Finally, let us show that $\Psi$ is an $H_{J}$-module homomorphism.
Let $\xi \in W_J$, $m \in M$, and $\tau \in {}^{J}W^{I}$.
And let $\xi = z \kappa$ be the decomposition given by~\eqref{eq: reduced expression for W_J}.
By Theorem~\ref{thm: left-right actions}, 
\begin{align*}
    \pi_j \pi_z =
    \begin{cases}
    \pi_{jz} & \text{ if } \ell(jz) > \ell(z), \\
    a \pi_z + b \pi_{jz} & \text{ if } \ell(jz) < \ell(z)
     \end{cases}
\end{align*}
for $j \in J$.
Assume that $\ell(jz) > \ell(z)$.
Let 
\begin{align*}
jz = z' \kappa', \text{ where } z' \in W^{K(\tau)}_{J} \text{ and } \kappa' \in W_{K(\tau)},
\end{align*}
be the reduced factorization of $jz$ with respect to $W_J / W_{K(\tau)}$.
Then $\pi_{jz} = \pi_{z'} \pi_{\kappa'}$.
Using this factorization, we can show that
\begin{align*}
\pi_{j} E_{\tau}(\pi_{\xi} \otimes m) = E_{\tau}(\pi_{j} \pi_{z} \pi_{\kappa} \otimes m)
    &= E_{\tau}(\pi_{jz} \otimes \pi_{\kappa} \cdot^{c_{\tau}} m)\\
    &= E_{\tau}(\pi_{z'} \pi_{\kappa'}\otimes \pi_{\kappa} \cdot^{c_{\tau}} m)\\
    &= E_{\tau}(\pi_{z'} \otimes \pi_{\kappa'} \cdot^{c_{\tau}} (\pi_{\kappa} \cdot^{c_{\tau}} m)).
\end{align*}
Hence,
\begin{align*}
    \Psi(\pi_{j} E_{\tau}(\pi_{\xi} \otimes m)) 
    &= \Psi( E_{\tau}(\pi_{z'} \otimes \pi_{\kappa'} \cdot^{c_{\tau}} (\pi_{\kappa} \cdot^{c_{\tau}} m)) )\\
    &= \pi_{z'} \pi_{\tau} \otimes c_{\tau}(\pi_{\kappa'}) \cdot ( c_{\tau}(\pi_{\kappa}) \cdot m)\\
    &= \pi_{z'} \pi_{\tau} c_{\tau}(\pi_{\kappa'}) \otimes  (c_{\tau}(\pi_{\kappa}) \cdot m).
\end{align*}
On the other hand, we can show that
\begin{align*}
    \pi_{j} \Psi( E_{\tau}(\pi_{\xi} \otimes m)) 
    &= \pi_{j} \Psi( E_{\tau}(\pi_z \otimes \pi_{\kappa} \cdot^{c_{\tau}} m))\\
    &= \pi_{j} (\pi_z \pi_{\tau} \otimes (c_{\tau}(\pi_{\kappa}) \cdot m))\\
    &= \pi_{z'} \pi_{\kappa'} \pi_{\tau} \otimes (c_{\tau}(\pi_{\kappa}) \cdot m).
\end{align*}
By Lemma~\ref{lem: conjugate homomorphism}, we conclude that $\Psi(\pi_{j} E_{\tau}(\pi_{\xi} \otimes m)) = \pi_{j} \Psi( E_{\tau}(\pi_{\xi} \otimes m)) $.
In the case of $\ell(jz) < \ell(z)$, we have $\pi_j \pi_z = a \pi_z + b \pi_{jz}$. Let $\pi_{z} = \pi_{z''} \pi_{\kappa''}$ and $\pi_{jz} = \pi_{z'''} \pi_{\kappa'''}$ be the reduced factorizations of $z$ and $jz$ with respect to $W_J / W_{K(\tau)}$, respectively. In a similar way as above, it can be seen that the same result holds when $\ell(jz) < \ell(z)$.

This completes the proof.
\end{proof}

\begin{remark}
Lemma~\ref{lem: conjugate homomorphism} plays a key role in the proof of Theorem~\ref{thm: Mackey}.
By extending this lemma to Iwahori-Hecke algbras, one can state Theorem~\ref{thm: Mackey}
for Iwahori-Hecke algebras without difficulty.
The Mackey decomposition formula thus obtained 
recovers~\cite[Proposition 9.1.8]{GP00} 
since the latter is stated only in the case where 
given parameters $a_s, b_s\in R$
satisfy that $b_s=1-a_s$ for all $s \in S$ 
and the $H_R(W,S,\{a_s, 1-a_s: s\in S\})$-module in consideration is 
free as an $R$-module.
\end{remark}

Let's take a closer look at the case where $R = \mathbb{C}$, $W= \SG_{m+n}$, $W_I$ and $W_J$ are maximal parabolic subgroups of $W$, and $M$ is a tensor product of two modules. 
In the following, we simply write $H_n$ for $H_{\SG_n}(a,b)$.
Then Theorem~\ref{thm: Mackey} can be rewritten in the following simple form.

\begin{corollary}\label{cor: Mackey for 0-Hecke}
For all $a,b\in \mathbb C$ and $1\le k \le m+n-1$, we have the following isomorphism of $H_k \otimes H_{m+n - k}$-modules: 
for $M \in \module H_m$ and $N \in \module H_n$,
\begin{align*}
(M \boxtimes N) \big\downarrow_{H_k \otimes H_{m+n - k}}^{H_{m+n}}
\cong 
\hspace{-0.8ex}\bigoplus_{\substack{t + s = k \\ t \le m,~ s \le n }} \hspace{-0.8ex}
\mathtt{T}_{t,s}
\left(
M\big\downarrow^{H_m}_{H_t \otimes H_{m-t}} \otimes \;
N\big\downarrow^{H_n}_{H_s \otimes H_{n-s}}
\right)
\big\uparrow^{H_k \otimes H_{m+n-k}}_{H_t \otimes H_s \otimes H_{m-t} \otimes H_{n-s}},
\end{align*}
where 
$M \boxtimes N= M \otimes N \big\uparrow_{H_m \otimes H_n}^{H_{m+n}}$ and
\[
\mathtt{T}_{t,s}: \module (H_t \otimes H_{m-t} \otimes H_s \otimes H_{n-s})
\ra \module (H_t \otimes H_s \otimes H_{m-t} \otimes H_{n-s})
\]
is the functor sending $M_1 \otimes M_2 \otimes N_1 \otimes N_2 \mapsto M_1 \otimes N_1 \otimes M_2 \otimes N_2$.
\end{corollary}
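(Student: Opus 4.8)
The plan is to derive this directly from Theorem~\ref{thm: Mackey} by specializing to $W=\SG_{m+n}$ and bookkeeping the parabolic subgroups involved. Write $S=\{s_1,\dots,s_{m+n-1}\}$ and set $I:=S\setminus\{s_m\}$, $J:=S\setminus\{s_k\}$, so that $W_I$ is the Young subgroup $\SG_m\times\SG_n$ on the blocks $\{1,\dots,m\}\sqcup\{m+1,\dots,m+n\}$ and $W_J=\SG_k\times\SG_{m+n-k}$ on $\{1,\dots,k\}\sqcup\{k+1,\dots,m+n\}$. First I would record the routine structural fact that whenever a parabolic $W_L$ splits as a direct product $W_{L_1}\times W_{L_2}$ of parabolics supported on disjoint sets of connected components of the Coxeter graph, the defining relations of $H_L$ give a canonical algebra isomorphism $H_L\cong H_{W_{L_1}}\otimes H_{W_{L_2}}$; under such identifications, restriction and induction along an inclusion $W_{L_1'}\times W_{L_2'}\subseteq W_{L_1}\times W_{L_2}$ of product parabolics factor as outer tensor products of the corresponding functors, and $M\boxtimes N=(M\otimes N)\big\uparrow^{H_S}_{H_I}$ by definition. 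In particular $H_I\cong H_m\otimes H_n$ and $H_J\cong H_k\otimes H_{m+n-k}$.

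The technical core is the combinatorics of $W_J\backslash\SG_{m+n}/W_I$. By the classical description of double cosets of Young subgroups of a symmetric group, these double cosets are in bijection with $2\times2$ nonnegative integer matrices having row sums $(k,m+n-k)$ and column sums $(m,n)$; such a matrix is determined by its $(1,1)$-entry $t$, and the constraints $0\le t\le k$, $t\le m$, $k-t\le n$ on $t$ are precisely the conditions $t+s=k$, $t\le m$, $s\le n$ of the statement, with $s:=k-t$. For the minimal-length representative $\tau_t$ of the double coset attached to $t$ — which by Lemma~\ref{lem: minimal representatives of double coset} is the unique element of that double coset lying in ${}^JW\cap W^I$ — I would write it down explicitly as the order-preserving ``unshuffle'' carrying $\{1,\dots,t\}$, $\{t+1,\dots,m\}$, $\{m+1,\dots,m+s\}$, $\{m+s+1,\dots,m+n\}$ to $\{1,\dots,t\}$, $\{k+1,\dots,k+m-t\}$, $\{t+1,\dots,k\}$, $\{k+m-t+1,\dots,m+n\}$ respectively. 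From this description one reads off that $W_{K(\tau_t)}=W_J\cap\tau_tW_I\tau_t^{-1}$ is the Young subgroup with block sizes $t,s,m-t,n-s$, sitting inside $W_J=\SG_k\times\SG_{m+n-k}$ as $(\SG_t\times\SG_s)\times(\SG_{m-t}\times\SG_{n-s})$, while $W_{\tau_t^{-1}K(\tau_t)\tau_t}=(\tau_t^{-1}W_J\tau_t)\cap W_I$ is the Young subgroup with block sizes $t,m-t,s,n-s$, sitting inside $W_I=\SG_m\times\SG_n$ as $(\SG_t\times\SG_{m-t})\times(\SG_s\times\SG_{n-s})$. Pinning down $\tau_t$ and tracking exactly which simple reflections it conjugates into $J$ is the step I expect to demand the most care; the rest is formal.

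Finally I would substitute this data into Theorem~\ref{thm: Mackey}. Using the tensor factorizations above — restriction along $(H_t\otimes H_{m-t})\otimes(H_s\otimes H_{n-s})\subseteq H_m\otimes H_n$ being the outer tensor product of restrictions, and induction along $(H_t\otimes H_s)\otimes(H_{m-t}\otimes H_{n-s})\subseteq H_k\otimes H_{m+n-k}$ the outer tensor product of inductions — the summand of $(M\boxtimes N)\big\downarrow^{H_S}_{H_J}$ indexed by $\tau_t$ takes the form
\[
c_{\tau_t}\!\left[\big(M\big\downarrow^{H_m}_{H_t\otimes H_{m-t}}\big)\otimes\big(N\big\downarrow^{H_n}_{H_s\otimes H_{n-s}}\big)\right]\big\uparrow^{H_k\otimes H_{m+n-k}}_{H_t\otimes H_s\otimes H_{m-t}\otimes H_{n-s}}.
\]
It then remains to identify the twist $c_{\tau_t}[-]$ with the functor $\mathtt{T}_{t,s}$: by Lemma~\ref{lem: conjugate homomorphism} together with the explicit form of $\tau_t$, the algebra isomorphism $c_{\tau_t}\colon H_t\otimes H_s\otimes H_{m-t}\otimes H_{n-s}\to H_t\otimes H_{m-t}\otimes H_s\otimes H_{n-s}$ acts as the identity on each of the four tensor factors and merely interchanges the second and third of them, so the associated twist is precisely $\mathtt{T}_{t,s}$, the pullback along this ``swap the middle two factors'' isomorphism. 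Summing over $t$ — equivalently over all pairs $(t,s)$ with $t+s=k$, $t\le m$, $s\le n$ — then yields the asserted isomorphism of $H_k\otimes H_{m+n-k}$-modules.
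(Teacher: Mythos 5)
Your proposal is correct and follows essentially the same route as the paper: specialize Theorem~\ref{thm: Mackey} to $I=S\setminus\{s_m\}$, $J=S\setminus\{s_k\}$, compute ${}^{J}W^{I}=\{w_t\}$ explicitly (your order-preserving unshuffle $\tau_t$ is exactly the paper's $w_t$), and identify the twist by $c_{\tau_t}$ with the factor-permuting functor $\mathtt{T}_{t,s}$. Your write-up is in fact more detailed than the paper's, which leaves the identification of $K(w_t)$ and $w_t^{-1}K(w_t)w_t$ and the tensor factorization of induction and restriction implicit.
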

\begin{proof}
Let $I=\{1,2,\ldots,m-1,m+1,m+2,\ldots,m+n-1\}$ and $J=\{1,2,\ldots,k-1,k+1,k+2,\ldots,m+n-1\}$.
It is well known that 
$$
W^I = \{w \in \SG_{m+n} \mid w(1) < \cdots < w(m) \ \text{and} \ w(m+1) < \cdots < w(m+n) \}
$$
(for instance, see~\cite{05BB}).
Combining ${}^J W =\{w \mid w^{-1} \in W^J\}$ with Lemma~\ref{lem: minimal representatives of double coset}, we derive that 
\[{}^{J}W^{I} = \{w_t \mid 0 \le t \le m, 0 \le k-t \le n \},\] 
where
\begin{align*}
    w_t(i) = 
    \begin{cases}
    i   & \text{ if } 1 \le i \le t, \\
    k-t+i   & \text{ if } t+1 \le i \le m, \\
    t-m+i   & \text{ if } m+1 \le i \le m+k-t, \\
    i   & \text{ if } m+k-t+1 \le i \le m+n,
    \end{cases}
\end{align*}
i.e.,
$$
w_t=1 \ldots t \mid k+1 \ldots k+m-t \mid t+1 \ldots k \mid m+k-t+1 \ldots m+n 
$$
in one-line notation.
It says that the functor $F_{c_{w_{t}}}$ induced by $c_{w_{t}}$ and the functor $\mathtt{T}_{t,s}$ are the same,
thus the assertion follows.
\end{proof}

\begin{remark}
(1) Recall that $H_n(1,0)=H_n(0)$, the $0$-Hecke algebra of $\mathfrak S_n$.
Hence Corollary~\ref{cor: Mackey for 0-Hecke} 
is a generalization of \cite[Thoerem 3]{21JKLO}.
In fact, the latter is stated only for the $0$-Hecke modules called Weak Bruhat interval modules.

(2) Consider tower of algebras $A = \bigoplus_{n \ge 0} A_n$ with $\dim(A_1) = 1$ 
such that Grothendieck groups $G(A)$ and $K(A)$ form graded dual Hopf algebras.
Bergeron, Lam, and Li conjectured in \cite[Conjecture 6.2]{BLL12} that such a tower of algebra 
is isomorphic to a tower $H(a,b) = \bigoplus_{n \ge 0} H_n(a,b)$ of algebras for some $a,b \in \mathbb{C}$.
Thus, under the validity of the conjecture, we can state the following isomorphism of 
$A_k \otimes A_{m+n - k}$-modules:
for $M \in \module A_m$ and $N \in \module A_n$,
\begin{align*}
&(M \boxtimes N) \big\downarrow_{A_k \otimes A_{m+n - k}}^{A_{m+n}}
\cong 
\hspace{-0.8ex}\bigoplus_{\substack{t + s = k \\ t \le m,~ s \le n }} \hspace{-0.8ex}
\mathtt{T}_{t,s}
\left(
M\big\downarrow^{A_m}_{A_t \otimes A_{m-t}} \otimes \;
N\big\downarrow^{A_n}_{A_s \otimes A_{n-s}}
\right)
\big\uparrow^{A_k \otimes A_{m+n-k}}_{A_t \otimes A_s \otimes A_{m-t} \otimes A_{n-s}},
\end{align*}
where 
$M \boxtimes N= M \otimes N \big\uparrow_{A_m \otimes A_n}^{A_{m+n}}$ and
\[
\mathtt{T}_{t,s}: \module (A_t \otimes A_{m-t} \otimes A_s \otimes A_{n-s})
\ra \module (A_t \otimes A_s \otimes A_{m-t} \otimes A_{n-s})
\]
is the functor sending $M_1 \otimes M_2 \otimes N_1 \otimes N_2$ to $M_1 \otimes N_1 \otimes M_2 \otimes N_2$.
\end{remark}


\section{(Anti-)involutions of two variable Hecke algebras and their interaction with induction product and restriction}
\label{Sec. Interaction with induction product and restriction}
Recall that the induced product of modules is given by $M \boxtimes N= M \otimes N \big\uparrow_{H_m \otimes H_n}^{H_{m+n}}$.
In this section, we investigate how the induction product $\boxtimes$ or the restriction $\downarrow^{H_{m+n}}_{H_m \otimes H_n}$ are intertwined with several (anti-)automorphisms twists of the two variable Hecke algebra $H_{m+n}$.
Recently, this subject was considered in~\cite[Corollary 1]{21JKLO} only for a class of modules of $0$-Hecke algebras called Weak Bruhat interval modules.
The content of this section can be seen as a broad generalization of~\cite[Corollary 1]{21JKLO}.

In~\cite{05Fayers}, Fayers introduced the involutions $\autotheta$ and $\autophi$ and the anti-involution $\autochi$ of the $0$-Hecke algebra $H_W(0)$ defined in the following manner:
\begin{align*}
&\autophi: H_W(0) \ra H_W(0), \quad \pi_s \mapsto \pi_{w_0 s w_0} \quad \text{for $s \in S$},\\
&\autotheta: H_W(0) \ra H_W(0), \quad \pi_s \mapsto  1 - \pi_s \quad \text{for $s \in S$}, \\
&\autochi: H_W(0) \ra H_W(0), \quad \pi_s \mapsto \pi_s \quad \text{for $s \in S$}.
\end{align*}
We are going to extend these to the morphisms of the two variable Hecke algebras.

\begin{lemma}\label{lem: theta braid}
For fixed $a, b \in R$, let $(y_n)_{n \ge 0}$ be the sequence in $R$ determined by the following recurrence
\[
y_n = -a y_{n-1} + b y_{n-2}, \quad y_0 = 0, \quad y_1 = -a.
\]
Then the following equality holds.
\[
( (\pi_i - a)(\pi_j - a)(\pi_i - a)\ldots )_n = (\pi_i \pi_j \pi_i \ldots)_n + \sum_{m=1}^{n-1} y_{n-m} ( (\pi_i \pi_j \pi_i \ldots)_m + (\pi_j \pi_i \pi_j \ldots)_m ) + y_n.
\]
\end{lemma}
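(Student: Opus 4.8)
The plan is to prove the identity by induction on $n$. For $n = 1$ both sides equal $\pi_i - a$ (the sum being empty and $y_1 = -a$), so the base case is immediate; $n = 2$ can also be checked directly by expanding $(\pi_i - a)(\pi_j - a) = \pi_i\pi_j - a\pi_i - a\pi_j + a^2$ and noting $y_2 = -a y_1 + b y_0 = a^2$. For the inductive step, I would write $L_n := ((\pi_i - a)(\pi_j - a)(\pi_i - a)\ldots)_n$ and observe that, depending on the parity of $n$, either $L_n = L_{n-1}(\pi_j - a)$ or $L_n = L_{n-1}(\pi_i - a)$; the two cases are symmetric in $i \leftrightarrow j$, so it suffices to treat one of them, say $L_n = L_{n-1}(\pi_i - a)$. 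Substituting the inductive hypothesis for $L_{n-1}$ and distributing gives
\[
L_n = (\pi_i\pi_j\pi_i\ldots)_{n-1}(\pi_i - a) + \sum_{m=1}^{n-2} y_{n-1-m}\bigl( (\pi_i\pi_j\pi_i\ldots)_m + (\pi_j\pi_i\pi_j\ldots)_m \bigr)(\pi_i - a) + y_{n-1}(\pi_i - a).
\]

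The main work is to simplify each alternating product times $(\pi_i - a)$ using the quadratic relation $\pi_i^2 = a\pi_i + b$. The key elementary identities are: if the length-$m$ alternating word $(\pi_i\pi_j\pi_i\ldots)_m$ ends in $\pi_i$, then multiplying by $(\pi_i - a)$ gives $\pi_i^2 \cdot(\text{shorter word}) - a(\pi_i\pi_j\pi_i\ldots)_m = b\,(\pi_i\pi_j\pi_i\ldots)_{m-1}$ after using $\pi_i^2 = a\pi_i + b$ and cancelling the $a\pi_i$ term — that is, such a word times $(\pi_i - a)$ collapses to $b$ times the word of length $m-1$; whereas if it ends in $\pi_j$, then multiplying by $(\pi_i - a)$ produces $(\ldots)_{m+1} - a(\ldots)_m$, i.e. it extends the word by one letter with a correction term $-a$ times the length-$m$ word. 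So each summand either shrinks by one (picking up a factor $b$) or grows by one (picking up a $-a$ correction), and which happens depends only on the parity of $m$ relative to $n$. I would organize this into two cases according to whether $m \equiv n \pmod 2$, then re-index the resulting sums so that everything is expressed as a coefficient times $(\pi_i\pi_j\pi_i\ldots)_m + (\pi_j\pi_i\pi_j\ldots)_m$ for $1 \le m \le n-1$, plus the leading term $(\pi_i\pi_j\pi_i\ldots)_n$ and a constant term.

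The last step is to check that the coefficients match. After collecting terms, the coefficient of $(\pi_i\pi_j\pi_i\ldots)_m + (\pi_j\pi_i\pi_j\ldots)_m$ for $1 \le m \le n-2$ will be a combination of $y_{n-1-m}$ (from the "extend" contribution of the length-$m$ term of $L_{n-1}$) and $-a\,y_{n-1-m}$ together with $b\,y_{n-1-(m+1)} = b\,y_{n-2-m}$ (from the "shrink" contribution of the length-$(m+1)$ term) — wait, one must be careful which parity case each falls into, but in every case the net coefficient works out to $-a\,y_{n-1-m} + b\,y_{n-2-m} = y_{n-m}$ by the defining recurrence, which is exactly what the statement requires; similarly the constant term reduces to $-a\,y_{n-1} + b\,y_{n-2} = y_n$, and the top-degree term has coefficient $1$. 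The main obstacle I anticipate is purely bookkeeping: keeping the parity of $n$ and of each index $m$ straight while pairing up the "extend" contribution from one term with the "shrink" contribution from its neighbor, and verifying that the boundary terms ($m = 1$, $m = n-1$, $m = n-2$, and the constant) behave correctly — in particular that the initial conditions $y_0 = 0$ and $y_1 = -a$ are precisely what make the edge cases close up. Writing $L_n = L_{n-1}(\pi_i - a)$ only (and appealing to symmetry for the other parity) will cut the casework roughly in half.
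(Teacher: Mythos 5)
Your proposal is correct and follows exactly the route the paper takes: the paper's entire proof is ``Use induction on $n$,'' and your argument supplies the details of that induction (the shrink/extend identities from $\pi_i^2 = a\pi_i + b$ and the coefficient recursion $-a\,y_{n-1-m} + b\,y_{n-2-m} = y_{n-m}$) correctly. The only minor caveat is that the ``symmetry'' between the two parities of $n$ is really a swap of the roles of the two alternating words $(\pi_i\pi_j\cdots)_m$ and $(\pi_j\pi_i\cdots)_m$ in the shrink/extend bookkeeping rather than a literal relabeling $i \leftrightarrow j$ of the whole identity, but this does not affect the validity of the argument.
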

\begin{proof}
Use induction on $n$.
\end{proof}

\begin{lemma}\label{involutions and anti-involution}
Let $(W,S)$ be a Coxeter system.
There are automorphisms $\autophi, \autotheta$ and an anti-automorphism $\autochi$ on $H_W$ defined by
\begin{align*}
&\autophi: H_W \ra H_W, \quad \pi_s \mapsto  \pi_{w_0 s w_0} \quad \text{for $s \in S$},\\
&\autotheta: H_W \ra H_W, \quad \pi_s \mapsto a-\pi_s \quad \text{for $s \in S$}, \\
&\autochi: H_W \ra H_W, \quad \pi_s \mapsto \pi_s \quad \text{for $s \in S$}.
\end{align*}
Furthermore, they commute with each other and are involutions.
\end{lemma}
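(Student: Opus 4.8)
The plan is to verify, for each of the three maps $\autophi$, $\autotheta$, $\autochi$ separately, that the defining relations of $H_W(a,b)$ are preserved (so that the map extends to an algebra homomorphism, resp.\ anti-homomorphism), that it is its own inverse on generators (hence an involution), and finally that the three commute. For $\autophi$ this is easy: conjugation by $w_0$ is an automorphism of the Coxeter system $(W,S)$ sending $s\mapsto w_0sw_0$, and it visibly sends the quadratic relation $\pi_s^2 = a\pi_s+b$ to itself and permutes the braid relations among themselves; since $w_0^2=e$, it is an involution. For $\autochi$, one checks that reversing a product of generators preserves the braid relations (a braid word $(\pi_i\pi_j\pi_i\cdots)_m$ read backwards is $(\cdots\pi_i\pi_j\pi_i)_m$, which equals $(\pi_j\pi_i\pi_j\cdots)_m$ or $(\pi_i\pi_j\pi_i\cdots)_m$ according to the parity of $m$) and trivially preserves the quadratic relation since it involves a single generator; as $\autochi$ fixes each $\pi_s$, it is an anti-involution.

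The substantive case is $\autotheta:\pi_s\mapsto a-\pi_s$. First I would check the quadratic relation: $(a-\pi_s)^2 = a^2 - 2a\pi_s + \pi_s^2 = a^2 - 2a\pi_s + a\pi_s + b = a(a-\pi_s) + b$, so the relation is preserved. The main obstacle is the braid relation: one must show $((\pi_i-a)(\pi_j-a)(\pi_i-a)\cdots)_m$ — up to the sign coming from $a-\pi_s = -(\pi_s-a)$, i.e.\ up to an overall factor $(-1)^m$ — equals the corresponding expression with $i$ and $j$ swapped. This is exactly where Lemma~\ref{lem: theta braid} enters: it expands $((\pi_i-a)(\pi_j-a)(\pi_i-a)\cdots)_m$ as $(\pi_i\pi_j\pi_i\cdots)_m$ plus a symmetric correction term $\sum_{\ell=1}^{m-1} y_{m-\ell}\big((\pi_i\pi_j\pi_i\cdots)_\ell + (\pi_j\pi_i\pi_j\cdots)_\ell\big) + y_m$ in which the roles of $i$ and $j$ are manifestly symmetric. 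Applying the braid relation $(\pi_i\pi_j\pi_i\cdots)_{m_{ij}} = (\pi_j\pi_i\pi_j\cdots)_{m_{ij}}$ of $H_W(a,b)$ to the leading term (with $m=m_{ij}$), the entire right-hand side is unchanged under swapping $i\leftrightarrow j$, which gives $((\pi_i-a)(\pi_j-a)\cdots)_{m_{ij}} = ((\pi_j-a)(\pi_i-a)\cdots)_{m_{ij}}$; multiplying by $(-1)^{m_{ij}}$ yields the braid relation for the images $\autotheta(\pi_i)=a-\pi_i$, $\autotheta(\pi_j)=a-\pi_j$. Hence $\autotheta$ is a well-defined algebra homomorphism, and since $\autotheta^2(\pi_s) = a-(a-\pi_s)=\pi_s$, it is an involution.

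It remains to check pairwise commutativity on the generators $\pi_s$, which suffices since these generate $H_W$. We have $\autophi\autotheta(\pi_s) = \autophi(a-\pi_s) = a-\pi_{w_0sw_0} = \autotheta(\pi_{w_0sw_0}) = \autotheta\autophi(\pi_s)$; $\autophi\autochi(\pi_s)=\pi_{w_0sw_0}=\autochi\autophi(\pi_s)$; and $\autotheta\autochi(\pi_s) = a-\pi_s = \autochi\autotheta(\pi_s)$ (here one uses that $\autochi$, being an anti-automorphism, still fixes each generator, and that composites of a homomorphism with an anti-homomorphism are anti-homomorphisms, so the identities need only be verified on generators). Thus $\autophi,\autotheta,\autochi$ pairwise commute and each squares to the identity, completing the proof.
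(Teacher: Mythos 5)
Your proposal is correct and follows essentially the same route as the paper: the quadratic relation for $\autotheta$ is checked by the same direct computation, the braid relation for $\autotheta$ is deduced from the $i\leftrightarrow j$ symmetry of the expansion in Lemma~\ref{lem: theta braid} exactly as intended, and $\autophi$ is handled via conjugation by $w_0$ (the paper routes this through Lemma~\ref{lem: conjugate homomorphism}, which is the same computation). Your write-up merely makes explicit the sign bookkeeping $(-1)^{m_{ij}}$, the parity analysis for $\autochi$ reversing braid words, and the observation that commutativity need only be checked on generators because the relevant composites are all (anti-)homomorphisms — details the paper declares ``clear.''
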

\begin{proof}
It is clear that $\autophi^2, \autotheta^2$, and $\autochi^2$ are identity maps and they commute with each other, and $\autochi$ is an anti-automorphism.
The map $\autophi$ is an automorphism due to Lemma~\ref{lem: conjugate homomorphism}.
To show $\autotheta$ is an automorphism, observe $(a-\pi_i)^2 = a^2 - 2 a \pi_i + (\pi_i)^2 = a(a-\pi_i) +b$.
The braid relation follows from Lemma~\ref{lem: theta braid}.
\end{proof}

Let $\opi_i := \pi_i - a$. Then for any reduced expression $s_{i_1}s_{i_2} \ldots s_{i_l}$ of $w \in W$, $\opi_w := \opi_{i_1}\opi_{i_2} \ldots \opi_{i_l}$ is independent of choices of reduced expressions since $\opi_i$ satisfy the braid relations.
Similar to Theorem~\ref{thm: left-right actions}, one can show that two variable Hecke algebra $H_W(a, b)$ over $R$ is a free $R$-module with basis elements $\opi_w \, (w \in W)$, and for all $s \in S, w \in W$, multiplication is given by
\[\opi_s \opi_w
=\begin{cases}
\opi_{sw}          &\text{ if } \ell(sw)>\ell(w), \\ 
-a\opi_w+ b\opi_{sw} & \text{ if }\ell(sw)<\ell(w).
\end{cases}\]

\subsection{Formulas for the involution twists and the induction product and formulas for the restriction}
In the following, let us investigate how (anti-)involution twists behave with respect to induction products and restrictions.

\begin{lemma}\label{lem: ind, rest of algebra modules}
Let $B$ be a subalgebra of $A$ and $\alpha$ be an automorphism of $A$.
\begin{enumerate}[label = {\rm (\arabic*)}]
    \item For a $B$-module $K$,
    $\alpha[K \uparrow^A_B] \cong \alpha[K] \uparrow^A_{\alpha^{-1}(B)}$.
    \item For an $A$-module $L$,
    $\alpha[L \big\downarrow^A_B] \cong \alpha[L] \big\downarrow^A_{\alpha^{-1}(B)}$.
\end{enumerate}
\end{lemma}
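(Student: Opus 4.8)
The plan is to dispatch (2) by unwinding definitions and to prove (1) by writing down an explicit isomorphism. Throughout, note that since $\alpha$ is an automorphism of $A$, the subset $\alpha^{-1}(B)$ is again a subalgebra of $A$ and $\alpha$ restricts to an algebra isomorphism $\alpha^{-1}(B)\to B$; it is this restricted morphism that is understood when we write $\alpha[K]$ for a $B$-module $K$. Thus $\alpha[K]$ is an $\alpha^{-1}(B)$-module, $\alpha[K]\big\uparrow^A_{\alpha^{-1}(B)}$ is an $A$-module, and both sides of each stated isomorphism live in the expected category.

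For (2), I would compare the two $\alpha^{-1}(B)$-module structures carried by the underlying vector space $L$. On one side, $\alpha[L\big\downarrow^A_B]$ has underlying space $L$ and, for $b\in\alpha^{-1}(B)$ and $v\in L$, the action $b\cdot^{\alpha}v=\alpha(b)\cdot v$, where $\cdot$ is the original $A$-action on $L$. On the other side, $\alpha[L]$ has underlying space $L$ with $A$ acting by $a\cdot^{\alpha}v=\alpha(a)\cdot v$, and restricting this action to the subalgebra $\alpha^{-1}(B)$ yields exactly $b\cdot^{\alpha}v=\alpha(b)\cdot v$. Hence $\alpha[L\big\downarrow^A_B]$ and $\alpha[L]\big\downarrow^A_{\alpha^{-1}(B)}$ are literally the same $\alpha^{-1}(B)$-module, so the identity map is the desired isomorphism.

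For (1), I would exhibit the map
\[
\Theta\colon A\underset{\alpha^{-1}(B)}{\otimes}\alpha[K]\longrightarrow \alpha\big[A\underset{B}{\otimes}K\big],\qquad x\otimes k\longmapsto \alpha(x)\otimes k,
\]
and verify three points. First, $\Theta$ is well defined over the balanced tensor product: for $b\in\alpha^{-1}(B)$ one has $xb\otimes k\mapsto\alpha(x)\alpha(b)\otimes k$ and $x\otimes(b\cdot^{\alpha}k)=x\otimes\alpha(b)k\mapsto\alpha(x)\otimes\alpha(b)k$, and these coincide in $\alpha[A\otimes_B K]$ because $\alpha(b)\in B$. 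Second, $\Theta$ is $A$-linear: the left $A$-action on the source is left multiplication, while on the target it is $a\cdot^{\alpha}(y\otimes k)=\alpha(a)y\otimes k$, and indeed $\Theta(ax\otimes k)=\alpha(ax)\otimes k=\alpha(a)\alpha(x)\otimes k=a\cdot^{\alpha}\Theta(x\otimes k)$. Third, $\Theta$ is bijective, with inverse $y\otimes k\mapsto\alpha^{-1}(y)\otimes k$, which is well defined by the same computation with $\alpha^{-1}$ in place of $\alpha$. This establishes $\alpha[K\big\uparrow^A_B]\cong\alpha[K]\big\uparrow^A_{\alpha^{-1}(B)}$.

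There is no genuinely difficult step here. The only thing to be attentive to is the bookkeeping of where $\alpha$ versus $\alpha^{-1}$ occurs, and in particular the fact that the source and target of $\Theta$ are balanced tensor products over the two \emph{different} subalgebras $\alpha^{-1}(B)$ and $B$ — this is precisely what makes twisting the first tensor factor by $\alpha$ legitimate, and it is the point most likely to cause confusion if handled carelessly.
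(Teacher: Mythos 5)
Your proposal is correct and follows essentially the same route as the paper: part (2) by observing the two $\alpha^{-1}(B)$-actions on $L$ coincide, and part (1) by an explicit isomorphism of balanced tensor products. The only cosmetic difference is that you put the twist into the map itself ($x\otimes k\mapsto\alpha(x)\otimes k$), whereas the paper uses the identity on symbols and absorbs the twist into the bimodule structure of the first tensor factor; your version is, if anything, slightly more careful about identifying the target with the induced module $A\otimes_{\alpha^{-1}(B)}\alpha[K]$.
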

\begin{proof}
(1) Note that $\alpha |_{\alpha^{-1}(B)}: \alpha^{-1}(B) \to B$ is an isomorphism, so $\alpha[K]$ is an $\alpha^{-1}(B)$-module.
Consider the bijection $\Phi: \alpha[A \otimes_B K] \to \alpha[A] \otimes_{\alpha^{-1}(B)} \alpha[K]$ given by
$a \otimes k \mapsto a \otimes k$ for any $a \in A$ and $k \in K$.
Since $\Phi(a \cdot b \otimes k) = a \cdot^{\alpha} \alpha^{-1}(b) \otimes k = a \otimes \alpha^{-1}(b) \cdot_\alpha k = a \otimes b \cdot k = \Phi (a \otimes b \cdot k )$ for any $b \in B$, $\Phi$ is well-defined.
The $A(=\alpha(A))$-module structure on the left hand side is given by 
$a' \cdot^{\alpha} (a \otimes k) = (\alpha(a') \cdot a) \otimes k$ for any $a', a \in A$ and $k \in K$.
And, $a' \cdot^{\alpha} \Phi( a \otimes k) = a' \cdot^{\alpha} ( a \otimes k) = (\alpha(a') \cdot a) \otimes k$, which shows that $\Phi$ is an $A$-module homomorphism.
(2) Similarly, the identity map on the vector space $L$ is an isomorphism between two $\alpha^{-1}(B)$-modules.
\end{proof}

Let $\autoomega:= \autophi \circ \autotheta$.
Then we can derive the following relations.

\begin{theorem}\label{thm: automorhpism, induction, restriction}
Let $M \in \module H_m(0)$, $N \in \module H_n(0)$. Then we have following isomorphisms of $H_{m+n}(0)$-modules:
\begin{enumerate}[label = {\rm (\arabic*)}]
    \item $\autophi[M \boxtimes N] \cong \autophi[N] \boxtimes \autophi[M]$
    \item $\autotheta[M \boxtimes N] \cong \autotheta[M] \boxtimes \autotheta[N]$
    \item $\autoomega [M \boxtimes N] \cong \autoomega [N] \boxtimes \autoomega [M]$
\end{enumerate}    
For $L \in \module H_{m+n}$, we have following isomorphisms of $H_m \otimes H_n$-modules:
\begin{enumerate}[resume*]
    \item $\autophi[L \downarrow^{H_{m+n}}_{H_n \otimes H_m}] \cong \autophi[L] \downarrow^{H_{m+n}}_{H_m \otimes H_n}$
    \item $\autotheta[L \downarrow^{H_{m+n}}_{H_m \otimes H_n}] 
\cong \autotheta[L] \downarrow^{H_{m+n}}_{H_m \otimes H_n}$
    \item If $L$ is a free $R$-module of finite rank, then 
    $
    \autochi[L \downarrow^{H_{m+n}}_{H_m \otimes H_n}] \cong \autochi[L] \downarrow^{H_{m+n}}_{H_m \otimes H_n}.
    $
\end{enumerate}
\end{theorem}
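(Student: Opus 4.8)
The plan is to deduce (1)--(5) from Lemma~\ref{lem: ind, rest of algebra modules} by recording how $\autophi$, $\autotheta$, $\autochi$ act on the two parabolic subalgebras $H_m\otimes H_n$ and $H_n\otimes H_m$ of $H_{m+n}$, and to treat (6) separately because $\autochi$ is only an anti-automorphism. Put $I=\{1,\dots,m-1,m+1,\dots,m+n-1\}$ and $I'=\{1,\dots,n-1,n+1,\dots,m+n-1\}$, so that $H_I\cong H_m\otimes H_n$ and $H_{I'}\cong H_n\otimes H_m$ as subalgebras of $H_{m+n}$, and recall $M\boxtimes N=(M\otimes N)\uparrow^{H_{m+n}}_{H_I}$. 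Since $w_0 s_i w_0 = s_{m+n-i}$ in $\SG_{m+n}$, the automorphism $\autophi$ carries $H_I$ onto $H_{I'}$, and under these identifications $\autophi|_{H_{I'}}\colon H_{I'}\to H_I$ is the flip of the two tensor factors followed by $\autophi_m\otimes\autophi_n$, where $\autophi_m$, $\autophi_n$ are the $w_0$-conjugations of $H_m$, $H_n$. In contrast, from $\autotheta(\pi_s)=a-\pi_s$ and $\autochi(\pi_w)=\pi_{w^{-1}}$ one sees that both $\autotheta$ and $\autochi$ stabilize $H_I$, restricting (with no flip) to the automorphism $\autotheta_m\otimes\autotheta_n$ and to the anti-automorphism $\autochi_m\otimes\autochi_n$, respectively. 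As $\autophi,\autotheta,\autochi$ are involutions (Lemma~\ref{involutions and anti-involution}), the same statements compute their inverses on these subalgebras.

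For (1)--(3) I would apply Lemma~\ref{lem: ind, rest of algebra modules}(1) with $A=H_{m+n}$, $B=H_I$, $K=M\otimes N$. Taking $\alpha=\autophi$ gives $\autophi[M\boxtimes N]\cong\autophi[M\otimes N]\uparrow^{H_{m+n}}_{\autophi^{-1}(H_I)}=\autophi[M\otimes N]\uparrow^{H_{m+n}}_{H_{I'}}$, and the description of $\autophi|_{H_{I'}}$ identifies the $H_{I'}=H_n\otimes H_m$-module $\autophi[M\otimes N]$ with $\autophi[N]\otimes\autophi[M]$ through the tensor-factor flip; this is (1). The same step with $\alpha=\autotheta$ (which fixes $H_I$ with no flip) gives $\autotheta[M\otimes N]\cong\autotheta[M]\otimes\autotheta[N]$ and hence (2). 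For (3), write $\autoomega=\autophi\circ\autotheta$, so that $F_{\autoomega}=F_{\autotheta}\circ F_{\autophi}$, and chain: $\autoomega[M\boxtimes N]=\autotheta[\autophi[M\boxtimes N]]\cong\autotheta[\autophi[N]\boxtimes\autophi[M]]\cong\autotheta[\autophi[N]]\boxtimes\autotheta[\autophi[M]]=\autoomega[N]\boxtimes\autoomega[M]$.

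For (4) and (5) I would instead use Lemma~\ref{lem: ind, rest of algebra modules}(2) with $A=H_{m+n}$ and the given $L$: with $\alpha=\autophi$, $B=H_{I'}$ this yields $\autophi[L\downarrow^{H_{m+n}}_{H_{I'}}]\cong\autophi[L]\downarrow^{H_{m+n}}_{\autophi^{-1}(H_{I'})}=\autophi[L]\downarrow^{H_{m+n}}_{H_I}$, which is (4), and with $\alpha=\autotheta$, $B=H_I$, using $\autotheta^{-1}(H_I)=H_I$, one gets (5).

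The remaining and main obstacle is (6), which falls outside Lemma~\ref{lem: ind, rest of algebra modules} because $\autochi$ is an anti-automorphism. Here the plan is a direct argument from the definition~\eqref{eq: anti-automorphism twist}: $\autochi$ restricts to the anti-automorphism $\autochi_m\otimes\autochi_n$ of $H_m\otimes H_n$, and both $\autochi[L\downarrow^{H_{m+n}}_{H_m\otimes H_n}]$ and $\autochi[L]\downarrow^{H_{m+n}}_{H_m\otimes H_n}$ are realized on $\Hom_R(L,R)$; one then checks that the two $H_m\otimes H_n$-actions, in each case sending $b\cdot\delta$ to $v\mapsto\delta(\autochi(b)v)$, literally agree, so the identity map is the required isomorphism. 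The hypothesis that $L$ be free of finite rank over $R$ is what keeps this clean: it makes the pairing $L\times\Hom_R(L,R)\to R$ perfect, so that $\Hom_R(L,R)$ is again a finite-rank module and $L\cong\Hom_R(\Hom_R(L,R),R)$ — the reflexivity unavailable for arbitrary $R$-modules, which is why the hypothesis appears only in (6). I expect the one genuinely delicate point of the whole proof to be this action-compatibility check, where one must keep track of which argument of the pairing $\autochi$ acts upon.
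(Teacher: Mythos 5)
Your proposal is correct and follows essentially the same route as the paper for parts (1)--(5): apply Lemma~\ref{lem: ind, rest of algebra modules} with $A=H_{m+n}$, $B=H_m\otimes H_n$, taking $\alpha=\autophi$ (which carries $H_m\otimes H_n$ onto $H_n\otimes H_m$, with a tensor-factor flip) or $\alpha=\autotheta$ (which fixes $H_m\otimes H_n$ factorwise), and chain through $\autoomega=\autophi\circ\autotheta$ for (3); parts (4)--(5) are the same lemma applied to restriction. For (6) you give a cleaner presentation than the paper does: you observe directly that $\autochi[L\downarrow]$ and $\autochi[L]\downarrow$ both live on $\Hom_R(L,R)$ with the \emph{same} $H_m\otimes H_n$-action $b\cdot\delta = \bigl(v\mapsto\delta(\autochi(b)v)\bigr)$, so the identity map is the isomorphism, whereas the paper phrases the same observation through a choice of dual bases and a compatible bilinear form. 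One small remark: your justification for the freeness hypothesis is somewhat beside the point. Once you have established that the identity map intertwines the two actions, no reflexivity or perfectness of the pairing is invoked anywhere, so the hypothesis plays no role in your own argument; it appears in the paper because the authors set up (6) via dual bases, which is where finite rank and freeness enter formally. This does not affect the correctness of your proof, but the sentence about $L\cong\Hom_R(\Hom_R(L,R),R)$ being ``what keeps this clean'' is a non sequitur and could be dropped.
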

\begin{proof}
(1) Put $A=H_{m+n}$, $B=H_m \otimes H_n$ and $\alpha = \autophi$ in Lemma~\ref{lem: ind, rest of algebra modules} (1), then we have following isomorphisms:
\begin{align*}
\autophi[M \boxtimes N]
&\cong \autophi[M \otimes N \uparrow^{H_{m+n}}_{H_m \otimes H_n}]\\
&\cong \autophi[M \otimes N] \uparrow^{H_{m+n}}_{\autophi^{-1}(H_m \otimes H_n)}\\
&\cong (\autophi[M] \otimes \autophi[N]) \uparrow^{H_{m+n}}_{\autophi^{-1}(H_m \otimes H_n)}\\
&\cong (\autophi[M] \otimes \autophi[N]) \uparrow^{H_{m+n}}_{H_n \otimes H_m}\\
&\cong \autophi[N] \boxtimes \autophi[M]
\end{align*}

(2) Replace $\autophi$ with $\autotheta$ in the above.

(3) Combine (1) and (2).

(4) \& (5) Immediately follows from Lemma~\ref{lem: ind, rest of algebra modules} (2).

(6) Let $\{e_i\}$ and $\{\epsilon_i\}$ be dual basis for $L$ and $\autochi[L]$ so that if $\langle,\rangle$ is the bilinear form given by $\langle e_i, \epsilon_j \rangle = \delta_{ij}$, then $\langle \pi_i \cdot m, \mu \rangle = \langle m, \pi_i \cdot \mu \rangle$ for all $m \in M, \mu \in \autochi[M]$ and $i \in \{1,2,\ldots,m+n\}$.
Then we automatically have $\langle \pi_i \cdot m, \mu \rangle = \langle m, \pi_i \cdot \mu \rangle$ for all $i \in \{1,2,\ldots,m+n\} \setminus \{m\}$, which proves our assertion.
\end{proof}

\subsection{Formulas for the anti-involution twists and the induction product}
In this subsection, we will show how the anti-involution twists interact with induction product.
For $W = \SG_{m+n}$ and $W_I = \SG_m \times \SG_n$, let us denote by $\Gamma$ the set of minimal length left coset representatives $W^I$.

\begin{lemma}\label{lem: dimensions of induced modules}
Let $M \in \module(H_m \otimes H_n)$ be a free $R$-module of finite rank and $\beta$ be a basis for $M$. Then $\{ \pi_{\gamma} \otimes \beta_i \mid \gamma \in \Gamma, \beta_i \in \beta \}$ and $\{ \opi_{\gamma} \otimes \beta_i \mid \gamma \in \Gamma, \beta_i \in \beta \}$ are bases for $M \uparrow ^{H_{m+n}}_{H_m \otimes H_n}$.
\end{lemma}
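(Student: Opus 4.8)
The plan is to reduce the statement to the standard fact that $H_{m+n} = H_{\SG_{m+n}}(a,b)$ is a free right $H_m \otimes H_n$-module with basis $\{\pi_\gamma : \gamma \in \Gamma\}$, and then tensor over $H_m \otimes H_n$. First I would recall from Lemma~\ref{lem: triple reduced expression} (applied with $J = \emptyset$, so that $\Gamma = W^I$) that every $w \in \SG_{m+n}$ factors uniquely as $w = \gamma u$ with $\gamma \in \Gamma$, $u \in \SG_m \times \SG_n$, and $\ell(w) = \ell(\gamma) + \ell(u)$; consequently $\pi_w = \pi_\gamma \pi_u$. Since $\{\pi_w : w \in \SG_{m+n}\}$ is an $R$-basis of $H_{m+n}$ by Theorem~\ref{thm: left-right actions}, and $\{\pi_u : u \in \SG_m \times \SG_n\}$ is an $R$-basis of $H_m \otimes H_n$, grouping the basis elements $\pi_w$ according to the $\Gamma$-component $\gamma$ shows that $H_{m+n} = \bigoplus_{\gamma \in \Gamma} \pi_\gamma (H_m \otimes H_n)$ as a right $H_m \otimes H_n$-module, each summand being free of rank $|\SG_m \times \SG_n|$. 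Hence $H_{m+n}$ is free as a right $H_m \otimes H_n$-module with basis $\{\pi_\gamma : \gamma \in \Gamma\}$.

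Next I would tensor: since $M \uparrow^{H_{m+n}}_{H_m \otimes H_n} = H_{m+n} \otimes_{H_m \otimes H_n} M$ and $H_{m+n} = \bigoplus_{\gamma \in \Gamma} \pi_\gamma (H_m \otimes H_n)$ with each summand free of rank one as a right module, we get
\[
H_{m+n} \otimes_{H_m \otimes H_n} M \;=\; \bigoplus_{\gamma \in \Gamma} \pi_\gamma (H_m \otimes H_n) \otimes_{H_m \otimes H_n} M \;\cong\; \bigoplus_{\gamma \in \Gamma} M,
\]
the isomorphism sending $\pi_\gamma \otimes m \mapsto (\ldots, m, \ldots)$ in the $\gamma$th slot. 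Feeding in the $R$-basis $\beta$ of $M$ then shows that $\{\pi_\gamma \otimes \beta_i : \gamma \in \Gamma,\ \beta_i \in \beta\}$ is an $R$-basis of $M \uparrow^{H_{m+n}}_{H_m \otimes H_n}$, which gives the first assertion.

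For the second assertion, involving $\opi_\gamma = \prod \opi_{i_j}$, I would invoke the second basis statement recorded after Lemma~\ref{involutions and anti-involution}: $\{\opi_w : w \in \SG_{m+n}\}$ is an $R$-basis of $H_{m+n}$, and the multiplication rule for $\opi_s \opi_w$ is formally identical to that for $\pi_s \pi_w$ (with $a$ replaced by $-a$). In particular the length-additivity relation $\opi_v \opi_w = \opi_{vw}$ when $\ell(v) + \ell(w) = \ell(vw)$ holds, so the same reduced-factorization argument gives $\opi_w = \opi_\gamma \opi_u$ and hence $H_{m+n} = \bigoplus_{\gamma \in \Gamma} \opi_\gamma (H_m \otimes H_n)$ as a right $H_m \otimes H_n$-module; repeating the tensor computation yields that $\{\opi_\gamma \otimes \beta_i\}$ is also a basis. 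The only mild subtlety — and the step I would flag as needing care — is checking that $\{\opi_u : u \in \SG_m \times \SG_n\}$ really is an $R$-basis of the subalgebra $H_m \otimes H_n$ and not just of some larger space: this follows because $\opi_u$ is an upper-triangular transform of $\{\pi_v : v \le u\}$ with all $v$ lying in the same parabolic $\SG_m \times \SG_n$ (reduced expressions for $u \in \SG_m \times \SG_n$ use only generators $s_i$ with $i \neq m$), so the change of basis is internal to $H_m \otimes H_n$. Everything else is the routine tensor-product bookkeeping above.
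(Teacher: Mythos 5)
Your proof is correct, and it takes a somewhat different route from the paper's. The paper argues by spanning plus a rank count: it observes that the reduced factorization $w = \gamma u$ ($\gamma \in \Gamma$, $u \in \SG_m \times \SG_n$) gives $\pi_w \otimes m = \pi_\gamma \otimes \pi_u m$, so the proposed set spans, and then invokes the equality $\mathrm{rank}(M\uparrow) = \binom{m+n}{m}\cdot\mathrm{rank}(M)$ together with $|\Gamma| = \binom{m+n}{m}$ to conclude it is a basis. You instead prove directly that $H_{m+n} = \bigoplus_{\gamma \in \Gamma} \pi_\gamma(H_m \otimes H_n)$ is free as a right $H_m \otimes H_n$-module with basis $\{\pi_\gamma\}$, then tensor with $M$ and distribute over the direct sum. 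The two are essentially the same length, but yours is self-contained and does not presuppose the rank formula — it derives it — which is the cleaner thing to do over a general commutative ring $R$, where a bare "dimension count" is not quite the right language. Your treatment of the $\opi_\gamma$ case is also more careful than the paper's "using the same argument": the point you flag, that $\{\opi_u : u \in \SG_m \times \SG_n\}$ is a basis of the parabolic subalgebra $H_m \otimes H_n$ because the unitriangular change of basis from $\{\pi_u\}$ stays inside that subalgebra, is exactly the detail worth recording.

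Two minor points. First, you cite Lemma~\ref{lem: triple reduced expression} with $J = \emptyset$ for the reduced factorization; the paper already states the $W^I \times W_I$ factorization directly at the start of Section~\ref{Sec.The Mackey formula for two variable Hecke algebras} (citing \cite[Proposition 2.4.4]{05BB}), so you could cite that instead, but this is cosmetic. Second, for full rigor you should note that the map $h \mapsto \pi_\gamma h$ from $H_m \otimes H_n$ to $\pi_\gamma(H_m \otimes H_n)$ is injective (it sends the basis $\{\pi_u\}_{u \in W_I}$ to the linearly independent set $\{\pi_{\gamma u}\}_{u \in W_I}$), which is what makes each summand free of rank one; you say this implicitly but it is the one line that makes the direct sum decomposition a freeness statement rather than just a spanning one.
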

\begin{proof}
By the reduced factorization of each element in $\SG_{m+n}$ with respect to $\SG_{m+n} / (\SG_m \times \SG_n)$, $\{ \pi_{\gamma} \otimes \beta_i \}$ spans $M \uparrow ^{H_{m+n}}_{H_m \otimes H_n}$. Since $\dim(M \uparrow ^{H_{m+n}}_{H_m \otimes H_n}) = | \binom{m+n}{m} | \cdot \dim(M)$ and $ | \binom{m+n}{m} | = | \Gamma |$, $\{ \pi_{\gamma} \otimes \beta_i \}$ is a basis for $M \uparrow ^{H_{m+n}}_{H_m \otimes H_n}$. Using the same argument, we can see that $\{ \opi_{\gamma} \otimes \beta_i \}$ is also a basis for $M \uparrow ^{H_{m+n}}_{H_m \otimes H_n}$.
\end{proof}

It is well known that 
\[\Gamma = \{\gamma \in \SG_{m+n} \mid \gamma(1) < \cdots < \gamma(m) \ \text{and} \ \gamma(m+1) < \cdots < \gamma(m+n) \}.\] 
For later use, we simply write $\gamma \in \Gamma$ as 
\[
\gamma(1) < \cdots < \gamma(m) \mid \gamma(m+1) < \cdots < \gamma(m+n)
\]
(refer to~\cite[Lemma 2.4.7]{05BB}).
Dividing cases according to where $i, i+1$ appear  in the one-line notation of $\gamma$, we obtain the following equalities for any $i \in \{1,2,\ldots,m+n-1 \}$ and $\gamma \in \Gamma$:
\begin{align}\label{eq: minimal decomposition unbar}
    \pi_i  \pi_{\gamma}= 
\begin{cases}
    \pi_{\gamma}  \pi_{\gamma^{-1}(i)}& \text{if } \gamma^{-1}(i) \le m, \gamma^{-1}(i+1) \le m,\\
    \pi_{\gamma}  \pi_{\gamma^{-1}(i)}& \text{if } \gamma^{-1}(i) > m, \gamma^{-1}(i+1) > m,\\
    \pi_{s_i  \gamma}& \text{if } \gamma^{-1}(i) \le m, \gamma^{-1}(i+1) > m,\\
    a \pi_{\gamma} + b \pi_{s_i  \gamma}& \text{if } \gamma^{-1}(i) > m, \gamma^{-1}(i+1) \le m.
\end{cases}
\end{align}
Similarly, we obtain the equalities for any $i \in [m+n-1]$ and $\gamma \in \Gamma$:
\begin{align}\label{eq: minimal decomposition bar}
    \pi_i  \opi_{\gamma}= 
\begin{cases}
    \opi_{\gamma}  \pi_{\gamma^{-1}(i)}& \text{if } \gamma^{-1}(i) \le m, \gamma^{-1}(i+1) \le m,\\
    \opi_{\gamma}  \pi_{\gamma^{-1}(i)}& \text{if } \gamma^{-1}(i) >m, \gamma^{-1}(i+1) > m,\\
    \opi_{s_i  \gamma} + a \opi_{\gamma}& \text{if } \gamma^{-1}(i) \le m, \gamma^{-1}(i+1) > m,\\
    b \opi_{s_i  \gamma}& \text{if } \gamma^{-1}(i) > m, \gamma^{-1}(i+1) \le m.
\end{cases}
\end{align}
Note that all $s_i  \gamma$'s appearing in~\eqref{eq: minimal decomposition unbar} and ~\eqref{eq: minimal decomposition bar} are in $\Gamma$.

Let $\hautophi:= \autophi \circ \autochi$, $\hautotheta := \autotheta \circ \autochi$, $\hautoomega:= \autoomega \circ \autochi$.
With this notation, we can state the main result of this subsection.

\begin{theorem}\label{thm: chi twist, induction}
Let $M \in  \module H_m$ and $N \in \module H_n$ be free $R$-modules of finite rank. Then we have following isomorphisms of $H_{m+n}$-modules:
\begin{enumerate}[label = {\rm (\arabic*)}]
    \item $\widehat{\autophi}[M \boxtimes N] \cong 
\widehat{\autophi}[M] \boxtimes \widehat{\autophi}[N]$
    \item $\autochi[M \boxtimes N] \cong \autochi[N] \boxtimes \autochi[M]$
    \item $\hautotheta[M \boxtimes N] \cong \hautotheta[N] \boxtimes \hautotheta[M]$
    \item $\hautoomega[M \boxtimes N] \cong \hautoomega[M] \boxtimes \hautoomega[N]$
\end{enumerate}  
\end{theorem}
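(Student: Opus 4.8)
The plan is to isolate item (2), namely $\autochi[M\boxtimes N]\cong\autochi[N]\boxtimes\autochi[M]$, as the only substantial point, and to deduce (1), (3) and (4) from it formally. Writing $\alpha\circ\nu$ for the map $x\mapsto\alpha(\nu(x))$, I would first record the identity $(\alpha\circ\nu)[V]=\nu[\alpha[V]]$, valid for any automorphism $\alpha$ and anti-automorphism $\nu$ of an algebra $A$ and any $A$-module $V$ (both sides have underlying space $V^*$ and action $(a\cdot\delta)(v)=\delta(\alpha(\nu(a))v)$); applied to $H_W$ for every Coxeter group $W$ this yields $\hautophi[-]=\autochi[\autophi[-]]$, $\hautotheta[-]=\autochi[\autotheta[-]]$ and $\hautoomega[-]=\autochi[\autoomega[-]]$. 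Granting (2), I would then combine it with Theorem~\ref{thm: automorhpism, induction, restriction}(1):
\[
\hautophi[M\boxtimes N]=\autochi[\autophi[M\boxtimes N]]\cong\autochi[\autophi[N]\boxtimes\autophi[M]]\cong\autochi[\autophi[M]]\boxtimes\autochi[\autophi[N]]=\hautophi[M]\boxtimes\hautophi[N],
\]
the middle isomorphism being (2) applied to $\autophi[N]\in\module H_n$ and $\autophi[M]\in\module H_m$; replacing Theorem~\ref{thm: automorhpism, induction, restriction}(1) by its parts (2) and (3) gives (3) and (4) in the same fashion. Here one uses that $\hautophi$, $\hautotheta$ and $\hautoomega$ send a free module of finite rank to a free module of finite rank, so the standing hypothesis is preserved at each step.

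For (2) itself the plan is to write down an explicit isomorphism on bases. Fix bases $\{m_a\}$ of $M$ and $\{n_b\}$ of $N$ with dual bases $\{m_a^*\}$, $\{n_b^*\}$, and put $\Gamma=\{\gamma:\gamma(1)<\cdots<\gamma(m),\ \gamma(m+1)<\cdots<\gamma(m+n)\}$ and $\Gamma'=\{\delta:\delta(1)<\cdots<\delta(n),\ \delta(n+1)<\cdots<\delta(n+m)\}$. By Lemma~\ref{lem: dimensions of induced modules}, $\{\opi_\gamma\otimes m_a\otimes n_b:\gamma\in\Gamma\}$ is a basis of $M\boxtimes N$ and $\{\pi_\delta\otimes n_b^*\otimes m_a^*:\delta\in\Gamma'\}$ is a basis of $\autochi[N]\boxtimes\autochi[M]=(\autochi[N]\otimes\autochi[M])\uparrow^{H_{m+n}}_{H_n\otimes H_m}$. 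Let $\sigma\colon\Gamma\to\Gamma'$ be the block-swap bijection that sends $[\gamma(1)\cdots\gamma(m)\mid\gamma(m+1)\cdots\gamma(m+n)]$ to $[\gamma(m+1)\cdots\gamma(m+n)\mid\gamma(1)\cdots\gamma(m)]$ in one-line notation (each of the two runs remains increasing, and $\ell(\sigma(\gamma))=mn-\ell(\gamma)$). I claim the $R$-linear map
\[
\Theta\colon\autochi[N]\boxtimes\autochi[M]\longrightarrow\autochi[M\boxtimes N]=(M\boxtimes N)^{*},\qquad \pi_\delta\otimes n_b^*\otimes m_a^*\longmapsto\bigl(\opi_{\sigma^{-1}(\delta)}\otimes m_a\otimes n_b\bigr)^{*},
\]
where $(\opi_\gamma\otimes m_a\otimes n_b)^{*}$ is the element of $(M\boxtimes N)^{*}$ dual to the $\opi$-basis of Lemma~\ref{lem: dimensions of induced modules}, is an isomorphism of $H_{m+n}$-modules. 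It clearly carries a basis bijectively onto a basis, so the whole content is $H_{m+n}$-equivariance.

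That equivariance check is the hard part. I would compute the action of a Coxeter generator $\pi_i$ of $H_{m+n}$ on $\pi_\delta\otimes n_b^*\otimes m_a^*$ using the analogue of~\eqref{eq: minimal decomposition unbar} for the parabolic $H_n\otimes H_m$ together with the $\autochi$-twisted actions on the two tensor slots, and its action on $\bigl(\opi_{\sigma^{-1}(\delta)}\otimes m_a\otimes n_b\bigr)^{*}$ by dualizing the left $\opi$-basis action on $M\boxtimes N$ recorded in~\eqref{eq: minimal decomposition bar} (this is why the $\opi$-basis appears in the target). Because $\sigma$ interchanges the two increasing runs, the position of $i$ and of $i+1$ relative to $n$ in $\delta$ is opposite to their position relative to $m$ in $\sigma^{-1}(\delta)$; hence the four cases of~\eqref{eq: minimal decomposition unbar} for $\delta$ correspond to the four cases of~\eqref{eq: minimal decomposition bar} for $\sigma^{-1}(\delta)$, one has $\sigma^{-1}(s_i\delta)=s_i\,\sigma^{-1}(\delta)$ whenever $i$ and $i+1$ lie in different runs, and the generators acting on the $\autochi[N]$- and $\autochi[M]$-slots correspond under the pairing to the generators acting on the $N$- and $M$-slots. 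Running through the four cases and tallying the scalars $1$, $a$, $b$ occurring on each side then confirms $\Theta(\pi_i\cdot x)=\pi_i\cdot\Theta(x)$ for every generator $\pi_i$ and basis vector $x$. I expect this four-case bookkeeping — keeping straight which generator hits which tensor slot, and tracking how dualization on the target redistributes the coefficients among the dual basis vectors — to be the one genuinely fiddly point; everything else is formal. The finiteness hypothesis on $M$ and $N$ enters only within (2): it is what makes $(M\otimes N)^{*}\cong M^{*}\otimes N^{*}$ hold, gives $(M\boxtimes N)^{*}$ the dual basis used above, and ensures the $\opi$-basis of $M\boxtimes N$ from Lemma~\ref{lem: dimensions of induced modules} is available.
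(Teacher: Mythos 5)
Your proposal is correct and reaches the result by a route genuinely different from the paper's, though closely parallel in technique. The paper proves item (1) directly: it builds a bilinear pairing between $M\boxtimes N$ and $\autochi[\autophi[M]]\boxtimes\autochi[\autophi[N]]$, both of which are induced from the \emph{same} parabolic $H_m\otimes H_n$, so the matching of basis elements is an involution $\gamma\mapsto\gamma'$ on $\Gamma$ coming from conjugation by the longest element $w_0$; it then deduces (2), (3), (4) from (1) by pre- and post-composing with $\autophi$ and $\autotheta$. You flip the order: you prove (2) directly, where the two sides are induced from the \emph{different} parabolics $H_m\otimes H_n$ and $H_n\otimes H_m$, so the natural matching is your block-swap $\sigma:\Gamma\to\Gamma'$, which is right multiplication by the longest element of $\Gamma$ (hence $\ell(\sigma(\gamma))=mn-\ell(\gamma)$ and, identically, $\sigma^{-1}(s_i\delta)=s_i\,\sigma^{-1}(\delta)$); you then deduce (1), (3), (4) from (2) by the same formal manipulation, using the identity $(\alpha\circ\nu)[V]=\nu[\alpha[V]]$, which the paper also uses implicitly. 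Both routes place a $\pi$-basis on one side and a $\opi$-basis on the other so that the single-term and two-term cross cases of~\eqref{eq: minimal decomposition unbar} and~\eqref{eq: minimal decomposition bar} complement each other under the pairing, and both reduce the hard step to the same four-case bookkeeping, which you describe accurately though do not fully write out; I checked the cross case $\gamma^{-1}(i)\le m,\ \gamma^{-1}(i+1)>m$ (where the scalars $1$ and $a$ on one side must come from $b$ and $a$ on the other, after the case reshuffling forced by whether $\delta=\sigma(\gamma)$ or $\delta=\sigma(s_i\gamma)$) and the tally closes. One minor point worth flagging: when you invoke (2) inside the derivation of (1) you are really using (2) with the roles of $m$ and $n$ interchanged (your $\autophi[N]\in\module H_n$ sits in the first tensor slot); since $m$ and $n$ are arbitrary this is harmless, but the quantifier should be stated so the application is unambiguous.
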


\begin{proof} (1) 
There exist bases $\{ e_k^M: k=1,2, \ldots, {\rm rank}(M)\}$ and $\{\epsilon_l^M : l=1,2, \ldots, {\rm rank}(M)\}$ for  $M$ and $\autochi[\autophi[M]]$, respectively, and a bilinear pairing $\langle \, , \,\rangle_M$ such that 
$\langle e_k^M, \epsilon_l^M \rangle_M = \delta_{kl}$ and $\langle \pi_i \cdot x, y \rangle_M = \langle x, \pi_{m-i} \cdot y \rangle_M$ for $x\in M, y\in \autochi[\autophi[N]]$, and $1\le i\le m-1$.
Similarly, there exist bases $\{ e_k^N: k=1,2, \ldots, {\rm rank}(N) \}$ and $\{ \epsilon_l^N: l=1,2, \ldots, {\rm rank}(N) \}$ for $N$ and $\autochi[\autophi[N]]$, respectively, and a bilinear pairing $\langle \,, \,\rangle_N$ such that $\langle e_k^N, \epsilon_l^N \rangle_N = \delta_{kl}$ and $\langle \pi_i \cdot x, y\rangle_N = \langle x, \pi_{m+n-i} \cdot y \rangle_N$ for all $x \in N, y \in \autochi[\autophi[N]]$ and $m+1\le i \le  m+n-1$. 
Using this, one can deduce that there exist bases $\{ e_k \}$ and $\{ \epsilon_l \}$ for $M \otimes N$ and $\autochi[\autophi[M]] \otimes \autochi[\autophi[N]]$, respectively, and a bilinear pairing 
$\langle \,,\, \rangle: M \otimes N \times \autochi[\autophi[M]] \otimes \autochi[\autophi[N]] \ra \mathbb C$ such that 
$\langle e_k, \epsilon_l \rangle = \delta_{kl}$
and 
\begin{align*}
    \langle \pi_i \cdot z, \zeta \rangle=
    \begin{cases}
    \langle z, \pi_{m-i} \cdot \zeta \rangle,   &\text{if } 1 \le i \le m-1\\
    \langle z, \pi_{m+n-i} \cdot \zeta \rangle, &\text{if } m+1 \le i \le m+n-1
    \end{cases}
\end{align*}
for all $k,l$ and $z \in M \otimes N, \zeta \in \autochi[\autophi[M]] \otimes \autochi[\autophi[N]]$ for $i \in \{1,2,\ldots,m+n-1 \}$.
From Lemma~\ref{lem: dimensions of induced modules} it follows that $\{ \pi_{\gamma} \otimes e_k \}$ and $\{ \opi_{\gamma} \otimes \epsilon_l \}$ are bases for $M \otimes N \uparrow^{H_{m+n}}_{H_m \otimes H_n}$ and $\autochi[\autophi[M]] \otimes \autochi[\autophi[N]]) \uparrow^{H_{m+n}}_{H_m \otimes H_n}$, respectively.
For 
\[\gamma = \gamma(1) < \cdots < \gamma(m) \mid \gamma(m+1) < \ldots < \gamma(m+n) \in \Gamma,\] 
we let 
\begin{align*}
 \gamma' := \gamma'(1) < \cdots < \gamma'(m) \mid \gamma'(m+1) < \ldots < \gamma'(m+n) \in \Gamma,
\end{align*}
where 
$$\gamma'(i) =  
\begin{cases}
     (m+n+1)-\gamma(m+1-i) & \text{ if } 1\le i \le m,\\   
     (m+n+1) - \gamma(2m+n+1-i)& \text{ if } m+1\le i \le m+n.
\end{cases} 
$$
Clearly the assignment $\gamma \mapsto \gamma'$ induces 
an involution on the set $\Gamma$. 
We now define a bilinear pairing $( \,,\,): M \boxtimes N \times \autochi[\autophi[M]] \boxtimes \autochi[\autophi[N]] \ra \mathbb C$ by letting 
\[
(\pi_{\gamma} \otimes z, \opi_{\lambda} \otimes \zeta):= \delta_{\gamma \lambda'}  \langle z, \zeta \rangle
\]
for $z \in M \otimes N, \zeta \in \autochi[\autophi[M]] \otimes \autochi[\autophi[N]]$, and $\gamma, \lambda \in \Gamma$.
For the assertion, we have only to show that  \begin{align}\label{eq: pairing}
    (\pi_i  \pi_{\gamma} \otimes e_k, \opi_{\lambda} \otimes \epsilon_l) 
= (\pi_{\gamma} \otimes e_k, \pi_{m+n-i}  \opi_{\lambda} \otimes \epsilon_l)
\end{align}
for all $k, l$, and $i \in \{1,2,\ldots,m+n-1\}$, and $\gamma, \lambda \in \Gamma$.
Due to~\eqref{eq: minimal decomposition unbar}, 
the left hand side of \eqref{eq: pairing} is given as follows:

\begin{enumerate}[label = {\rm (A\arabic*)}]
\item If $\gamma^{-1}(i) \le m$ and $\gamma^{-1}(i+1) \le m$, then 
\[
\begin{cases}
\langle \pi_{\gamma^{-1}(i)} \cdot e_k,  \epsilon_l \rangle & \text{if $\lambda = \gamma'$,} \\
      0 & \text{otherwise.}
\end{cases}
\]

\item
If $\gamma^{-1}(i) >m$ and $\gamma^{-1}(i+1) > m$, then 
\[
\begin{cases}
\langle \pi_{\gamma^{-1}(i)} \cdot e_k,  \epsilon_l \rangle & \text{if  $\lambda = \gamma'$,} \\
  0 & \text{otherwise.}
\end{cases}
\]

\item
If $\gamma^{-1}(i) \le m$ and $\gamma^{-1}(i+1) > m$, then 
\[
\begin{cases}
   \langle e_k,  \epsilon_l \rangle & \text{if  $\lambda = (s_i \cdot \gamma)'$,} \\
    0 & \text{otherwise.}
\end{cases}
\]

\item
If $ \gamma^{-1}(i) > m$ and $\gamma^{-1}(i+1) \le m$, then 
\[
\begin{cases}
       a \langle e_k,  \epsilon_l \rangle & \text{if  $\lambda = \gamma'$,} \\
       b \langle e_k,  \epsilon_l \rangle & \text{if  $\lambda = (s_i \cdot \gamma)'$,} \\
       0 & \text{otherwise.}
\end{cases}
\]
\end{enumerate}
On the other hand, due to~\eqref{eq: minimal decomposition bar}, the right hand side of \eqref{eq: pairing} is given as follows: 

\begin{enumerate}[label = {\rm (B\arabic*)}]
\item If $\lambda^{-1}(m+n-i) \le m$ and $ \lambda^{-1}(m+n-i+1) \le m$, then 
\[
\begin{cases}
 \langle e_k, \pi_{\lambda^{-1}(m+n+i)} \cdot  \epsilon_l \rangle & \text{if  $\gamma = \lambda'$,} \\0 & \text{otherwise.}
\end{cases}
\]

\item
If $\lambda^{-1}(m+n-i) >m$ and $ \lambda^{-1}(m+n-i+1) > m$, then 
\[
\begin{cases}
 \langle e_k, \pi_{\lambda^{-1}(m+n+i)} \cdot  \epsilon_l \rangle & \text{if  $\lambda = \gamma'$,} \\
      0& \text{otherwise.}
\end{cases}
\]

\item
If $\lambda^{-1}(m+n-i) \le m$ and $ \lambda^{-1}(m+n-i+1) > m$, then 
\[
\begin{cases}
 a \langle e_k,  \epsilon_l \rangle & \text{if $\lambda=\gamma'$,} \\
      \langle e_k,  \epsilon_l \rangle & \text{if $s_{m+n-i} \cdot \lambda = \gamma'$,} \\
      0 & \text{otherwise.}
\end{cases}
\]

\item
If $ \lambda^{-1}(m+n-i) > m$ and $ \lambda^{-1}(m+n-i+1) \le m$, then 
\[
\begin{cases}
       b \langle e_k,  \epsilon_l \rangle & \text{if $s_{m+n-i} \cdot \lambda = \gamma'$,} \\
      0, & \text{otherwise.}
\end{cases}
\]
\end{enumerate}
Comparing these calculations, one can deduce the equality~\eqref{eq: pairing}.
For instance, in the case where $\gamma^{-1}(i) \le m$,  $\gamma^{-1}(i+1) > m$, and $\lambda = (s_i \cdot \gamma)'$,
one has that $\lambda^{-1}(m+n-i) \le m$ and $ \lambda^{-1}(m+n-i+1) > m$ and $s_{m+n-i} \cdot \lambda = \gamma'$.
So, in this case, both sides of~\eqref{eq: pairing} are equal to $\langle e_k, \epsilon_l \rangle$ due to (A3) and (B4).

(2) Replace $M$ by $\autophi(M)$ and $N$ by $\autophi(N)$ in (1). Then the assertion follows from Theorem~\ref{thm: automorhpism, induction, restriction} (1). 

(3) \& (4) Combine (2) with Theorem~\ref{thm: automorhpism, induction, restriction} (1) and (2).
\end{proof}

\begin{remark}
The isomorphism 
\[\widehat{\autophi}[M] \uparrow^{H_n(0)}_{H_{n-1}(0)} \cong \widehat{\autophi}[M \uparrow^{H_n(0)}_{H_{n-1}(0)}]\]
has already appeared in~\cite[Lemma~6.4]{05Fayers}. 
The proof of Theorem~\ref{thm: chi twist, induction} can be viewed as a generalization of Fayers' proof.
\end{remark}

\bibliographystyle{plain}
\bibliography{references}
\end{document}